\numberwithin{equation}{section}
\theoremstyle{plain}
\newtheorem{thm}{Theorem}[section]
\newtheorem{lem}[thm]{Lemma}
\newtheorem{cor}[thm]{Corollary}
\theoremstyle{definition}
\newtheorem{defn}{Definition}[section]
\theoremstyle{remark}
\newtheorem{rem}{Remark}[thm]
\newcommand{\R}{{\mathbb{R}}} 
\newcommand{\I}{\mathcal{I}(t)}
\newcommand{\J}{\mathcal{J}(t)}
\newcommand{\Op}{(1-\partial_x^2)}
\newcommand{\IOp}{(1-\partial_x^2)^{-1}}
\newcommand{\Int}[1]{\int_{#1}}
\newcommand{\sech}{\operatorname{sech}}
\begin{document}
	\title[Asymptotics for Improved Boussinesq]{Decay in the one dimensional generalized Improved Boussinesq equation}
	\author[Christopher Maul\'en]{Christopher Maul\'en}  
	\address{Departamento de Ingenier\'{\i}a Matem\'atica and Centro
de Modelamiento Matem\'atico (UMI 2807 CNRS), Universidad de Chile, Casilla
170 Correo 3, Santiago, Chile.}
	\email{cmaulen@dim.uchile.cl}
	\thanks{(Ch.M.) Partially funded by Chilean research grants FONDECYT 1150202 and CONICYT PFCHA/DOCTORADO NACIONAL/2016-21160593}
	
		\author[Claudio Mu\~noz]{Claudio Mu\~noz} 
		\address{CNRS and Departamento de Ingenier\'{\i}a Matem\'atica and Centro
de Modelamiento Matem\'atico (UMI 2807 CNRS), Universidad de Chile, Casilla
170 Correo 3, Santiago, Chile.}
\email{cmunoz@dim.uchile.cl}
\keywords{Improved Boussinesq, decay, virial}
		\thanks{(Cl.M.) Partially funded by Chilean research grants FONDECYT  1150202 and 1191412, project France-Chile ECOS-Sud C18E06 and CMM Conicyt PIA AFB170001. Part of this work was done while Cl.M. was visiting the CMLS at Ecole Polytechnique, France; and the Departamento de Matem\'aticas Aplicadas de Granada, UGR, Spain.}
	\begin{abstract}
		We consider the decay problem for the generalized improved (or regularized)  Boussinesq model with power type nonlinearity, a modification of the originally ill-posed shallow water waves model derived by Boussinesq. This equation has been extensively studied in the literature, describing plenty of interesting behavior, such as global existence in the space $H^1\times H^2$, existence of super luminal solitons, and lack of a standard stability method to describe perturbations of solitons. The associated decay problem has been studied by Liu, and more recently by Cho-Ozawa, showing scattering in weighted spaces provided the power of the nonlinearity $p$ is sufficiently large. In this paper we remove that condition on the power $p$ and prove decay to zero in terms of the energy space norm $L^2\times H^1$, for any $p>1$, in two almost complementary regimes:  (i) outside the light cone for all small, bounded in time  $H^1\times H^2$ solutions, and (ii) decay on compact sets of arbitrarily large bounded in time $H^1\times H^2$ solutions. The proof consists in finding two new virial type estimates, one for the exterior cone problem based in the energy of the solution, and a more subtle virial identity for the interior cone problem, based in a modification of the momentum. 
	\end{abstract}
	\maketitle

	\section{Introduction}
	\subsection{Setting}
	This paper is concerned with the so-called generalized \emph{Improved Boussinesq} equation (gIB) \cite{Poch,Chree}
	\begin{equation}\label{eq:IMBq}
			\partial_{t}^2u-\partial_{x}^2\partial_{t}^2u-\partial_{x}^2 u-\partial_{x}^2 (|u|^{p-1}u) =0,\quad (t,x)\in \R\times \R,
	\end{equation}
	where $u=u(t,x)$ is a real-valued function, and $p>1$. Sometimes referred as the Pochhammer-Chree equation \cite{Liu}, this model was first introduced by Pochhammer \cite{Poch} in its linear version in 1876, and in its complete nonlinear form by Chree \cite{Chree}, in 1886. It was derived as a model of the longitudinal vibration of an elastic rod, as well as a model of nonlinear waves in weakly dispersive media, and shallow water waves.
	
	\medskip
	
	The model gIB \eqref{eq:IMBq} shares plenty of similarities with the so called \emph{generalized good and bad Boussinesq} models \cite{Bous}
	\begin{equation}\label{Bad_Good}
	\partial_{t}^2u \pm \partial_{x}^4 u-\partial_{x}^2 u-\partial_{x}^2 (|u|^{p-1}u) =0,\quad (t,x)\in \R\times \R,
	\end{equation}
	Here the plus sign denotes the good Boussinesq system, which is locally and globally well-posed in standard Sobolev spaces \cite{Notes_linares,Kishimoto_WP_2012}, and the {\it minus sign} represents the ``bad'' equation originally derived by Boussinesq \cite{Bous}, which is strongly linearly ill-posed. Precisely, motivated by the similar order of magnitude of $\partial_x$ and $\partial_t$ in shallow water waves, the linearized gIB model \eqref{eq:IMBq} was discussed by Whitham \cite[p. 462]{Whitham}. By doing the ``Boussinesq trick'' (changing two $\partial_x$ by two $\partial_t$) in the bad Boussinesq equation, one arrives to \eqref{eq:IMBq} and ill-posedness is no longer present. This regularization process leads to gIB \eqref{eq:IMBq}, also known as the \emph{regularized Boussinesq} equation.
	
	\medskip
	
	Although \eqref{eq:IMBq} is no longer strongly ill-posed as bad Boussinesq \eqref{Bad_Good}, it is still shares some of its unpleasant behavior, but also some nice surprising properties. In order to explain this in detail, we write \eqref{eq:IMBq} as the system
	\begin{align}\label{IB}
		\hbox{(gIB)}~ \begin{cases}
			\partial_{t}u =\partial_{x}v\\
			\partial_{t}v  =(1-\partial_{x}^{2})^{-1}\partial_{x}(u+|u|^{p-1}u).
		\end{cases}
	\end{align}
	This $2\times 2$ system is Hamiltonian, but as far as we understand, not integrable. Its Hamiltonian character leads to the conservation of energy and momentum, given by
	\begin{align}
		H(u,v)&=\dfrac{1}{2} \int_\R \left( u^2+v^2+(\partial_x v)^2 \right) dx+\dfrac{1}{p+1}\int_\R \vert u\vert^{p+1}dx, \label{eq:energia} \\
		P(u,v)&=\Int{\R} (uv+\partial_x u\partial_x v)dx. \label{eq:momentum}
	\end{align}
Note in particular the complex character of energy and momentum for gIB: the energy is always nonnegative, and makes sense e.g. for $u\in L^2 \cap L^{p+1}$, and $v\in H^1$. On the other hand, the momentum needs even more regularity than expected, and it is only well-defined for $(u,v)\in H^1\times H^1$ (or $L^2\times H^2$). Given this lack of concordance, completely contrary to classical linear waves, understanding the well-posedness problem in gIB is far from trivial. Indeed, it turns out that $L^2\times H^1$ seems not well suited to have a well-defined energy,  so in this work we shall work in the proper subspace $H^1\times H^2$, for the reasons explained below. 

\medskip

The pioneering work by Liu \cite{Liu} showed local and global well-posedness for \eqref{eq:IMBq} for data $(u_0,v_0) \in H^s\times H^{s+1}$ and $s\geq 1$. In addition, the energy and momentum \eqref{eq:energia}-\eqref{eq:momentum} are conserved by the flow, or in other words, the $L^2\times H^1$ norm of the solution remains bounded in time. Note however that the $H^1\times H^2$ norm of the solution need not be globally bounded in time. The method employed by Liu was essentially based in the Sobolev inclusion $H^1$ into $L^\infty$ in one dimension, since no useful dispersive decay estimates are available for gIB. The fact that the solvability space differs from the energy space is a property standard in quasilinear models, and gIB has the flavor of a standard one. Consequently, we believe that this weakly ill-posed behavior in gIB is deeply motivated and inherited by the original strongly ill-posed bad Boussinesq equation \eqref{Bad_Good}. Additionally, Liu also showed blow up of negative energy solutions of \eqref{IB} but with focusing nonlinearities (minus sign in $|u|^{p-1}u$ instead of plus sign). Finally, the controllability problem for gIB in a finite interval \eqref{IB} has been recently studied by Cerpa and Cr\'epeau \cite{CC}.

\medskip

	However, the gIB system \eqref{IB} also enjoys some nice properties. Indeed, this model is also characterized by the existence of \emph{super-luminal} solitary waves, or just naively solitons, of the form
	\begin{equation}\label{eqn:soliton0}
(u,v)=(Q_c,-cQ_c)(x-ct-x_0), \quad x_0\in\R, \quad |c|>1.
	\end{equation}
	The super-luminal character is represented by the condition $|c|>1$ on the speed. Here, the scaled soliton is slightly different from generalized Korteweg-de Vries (gKdV): $Q_c(s)=(c^2-1)^{1/(p-1)}Q\left(\sqrt{ \frac{c^2-1}{c^2}} s\right)$, and 
	\begin{equation}\label{Soliton}
	Q(s)= \left( \frac{p+1}{2\cosh^2\left(\frac{(p-1)s}{2}\right)}\right)^{\frac1{p-1}}>0
	\end{equation}
	 is the soliton that solves $Q''-Q+Q^p=0$, $Q\in H^1(\R)$. Note that $Q_c$ must solve the modified elliptic equation
	\begin{equation}\label{eqn:soliton}
	c^2 Q_c'' -(c^2-1)Q_c +Q_c^{p}=0.
	\end{equation}
	Since the speed of solitons can be arbitrarily large, it clearly implies that \eqref{IB} \emph{possesses infinite speed} of propagation, a fact not present in standard wave-like equations. Note also that solitons with speeds $|c|\downarrow 1$ are small in $L^\infty\cap H^1$, but they do not decay to zero as time evolves, in any standard norm.
	
	\medskip
	
	In this paper, we are motivated by the decay problem of solutions to gIB \eqref{IB}. This interesting question has attracted the attention of several people before us. Liu \cite{Liu} showed decay of solutions  to \eqref{IB} obtained from initial data satisfying e.g. $(u_0,v_0)$ in $H^1\times H^2$, $u_0\in L^1$ and $(1-\partial_x^2)^{1/2}v_0 \in L^1$, all of them small enough. In particular, he showed that for $p>12$, 
	\[
\sup_{t\geq 0}\left( (1+t)^{\frac1{10}} \|u(t)\|_{L^\infty} + \|(u,v)(t)\|_{H^1\times H^2}\right) <+\infty.
	\]
	He also showed that $p$ can be taken greater than 8 if $s>\frac32$ and $(u_0,v_0)$ in $H^s\times H^{s+1}$. 
Next, in \cite{wang-chen}, Wang and Chen extended this result to higher dimensions.

	\medskip
	
	The exponent $p$ in \eqref{IB} was recently improved by Cho and Ozawa \cite{cho-ozawa}, who showed using modified scattering techniques that $p$ can be taken greater than $9/2$ if $u_0\in H^s$, $s>\frac85$. The solution global in this case satisfies $\|u(t)\|_{L^\infty}=O(t^{-2/5})$ as $t\to +\infty$. Additionally, the same authors showed that the asymptotics as $t\to+\infty$ cannot be the linear one if $1<p\leq 2$ and near zero frequencies vanish at infinity. Lowering the exponent $p$ for which there is decay seems a complicated problem, due to the quasilinear behavior of gIB.	
	  
	\subsection{Main results} In this paper, we are interested in the asymptotics of gIB solutions in the lower $p$ case, namely any possible $p>1$. Since there should be modified dynamics, we believe that we need different tools to attack this problem. 
	
	\medskip
	
		Our first result deals with the exterior light-cone decay problem. More precisely let $a,b>0$ be arbitrary positive numbers. We consider the interval depending on time
	\begin{align}
		I(t)= \big(-\infty,-(1+a)t \big)\cup \big( (1+b)t,\infty \big), \qquad t>0.	\label{eq:interval}
	\end{align}
	Our first result shows that, regardless the power $p>1$, any global solution $(u,v)$ to \eqref{IB} which is sufficiently small and regular must concentrate inside the light cone.
	
	\begin{thm}[Decay in exterior light cones]\label{TH1}
		Let $(u,v)\in C(\R,H^1\times H^2)$ be a global small solution of \eqref{IB} such that, for some $\epsilon(a,b)>0$ small, one has
		\begin{align}\label{smallness0}
			\sup_{t\in\R}\Vert (u(t),v(t))\Vert_{H^1\times H^2}<\epsilon.
		\end{align}
		 Then, for $I(t)$ as in \eqref{eq:interval}, there is strong decay to zero in the energy space:
		\begin{align}\label{decay0}
			\lim_{t\to\infty} \Vert (u(t),v(t))\Vert_{(L^2\times H^1)(I(t))}=0.
		\end{align}
		Additionally, one has the mild rate of decay for $|\sigma|>1$:
		\begin{align}\label{integrability0}
			\int_{2}^{\infty} \!\! \Int{\R} e^{-c_0\vert x+\sigma t \vert}(u^2+v^2+(\partial_x v)^2) dxdt \lesssim
			_{c_0} \epsilon^2.
		\end{align}
	\end{thm}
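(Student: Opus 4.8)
The plan is to construct a virial functional based on the energy density of the solution, localized via a smooth weight moving with a super-luminal speed, and to show that its time derivative controls the exponentially weighted energy that appears in \eqref{integrability0}. Concretely, fix a speed $\sigma$ with $|\sigma|>1$ (say $\sigma = 1+a$ or $\sigma=-(1+b)$), pick a small $c_0>0$, and let $\varphi(x)=\tanh(c_0 x)$ (or a similar bounded, increasing profile with $\varphi'$ exponentially localized). Define
\begin{align}\label{eq:virial-cone-plan}
	\mathcal{I}_\sigma(t) = \int_\R \varphi(x+\sigma t)\,\big( u\,v + \partial_x u\,\partial_x v\big)(t,x)\,dx,
\end{align}
a localized version of the momentum \eqref{eq:momentum}; the sign of the cutoff is chosen so that translation at speed $\sigma$ pushes mass toward the region where the light-cone transport term has a definite sign. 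One first checks, using the smallness \eqref{smallness0} and the Sobolev embedding $H^1\hookrightarrow L^\infty$, that $|\mathcal{I}_\sigma(t)|\lesssim \epsilon^2$ uniformly in $t$.

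Next I would compute $\frac{d}{dt}\mathcal{I}_\sigma(t)$ using the system \eqref{IB}. The two contributions are the explicit $\sigma\,\varphi'$ term coming from the moving weight, and the terms where $\partial_t$ hits $u$ and $v$. Substituting $\partial_t u = \partial_x v$ and $\partial_t v = \IOp\partial_x(u+|u|^{p-1}u)$ and integrating by parts, the main term will be of the form $-\int \varphi'(x+\sigma t)\,\big(\text{quadratic energy density}\big)\,dx$ plus lower-order and nonlinear remainders. The key algebraic point is that, because $|\sigma|>1$, the coefficient in front of the positive-definite quadratic form $u^2+v^2+(\partial_x v)^2$ (or a form equivalent to it after using the $\IOp$ smoothing) is strictly negative and bounded away from zero; the nonlinear term, being at least cubic and controlled by $\epsilon^{p-1}$ times the quadratic weighted energy via $\|u\|_{L^\infty}\lesssim \epsilon$, can be absorbed for $\epsilon$ small. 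This yields
\begin{align}\label{eq:virial-coercive-plan}
	\frac{d}{dt}\mathcal{I}_\sigma(t) \leq -c_1 \int_\R \varphi'(x+\sigma t)\,\big(u^2+v^2+(\partial_x v)^2\big)\,dx + \text{(absorbable terms)},
\end{align}
for some $c_1=c_1(\sigma,c_0)>0$. Integrating in time from $2$ to $\infty$ and using the uniform bound on $\mathcal{I}_\sigma$ gives the spacetime estimate \eqref{integrability0}, since $\varphi'(x+\sigma t)\gtrsim e^{-c_0|x+\sigma t|}$ up to a constant.

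To pass from \eqref{integrability0} to the pointwise-in-time decay \eqref{decay0}, I would argue as follows: the integrability in time of a nonnegative quantity forces a sequence $t_n\to\infty$ along which the weighted energy tends to zero; then one upgrades this to a full limit by controlling the time derivative of the local energy (a standard argument: $\int e^{-c_0|x+\sigma t|}(\cdots)\,dx$ cannot oscillate because its derivative is bounded using the equation and \eqref{smallness0}), and finally one notes that on the exterior interval $I(t)$ defined in \eqref{eq:interval} the weight $e^{-c_0|x+\sigma t|}$, with $\sigma$ chosen appropriately at the two ends, is bounded below by a positive constant times $e^{-c_0|x|}\to$ nothing — more carefully, one covers $I(t)$ by finitely many translated exponential weights and sums. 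The main obstacle I anticipate is the computation of $\frac{d}{dt}\mathcal{I}_\sigma(t)$: the nonlocal operator $\IOp$ does not commute with the cutoff $\varphi$, so commutator terms $[\varphi,\IOp]$ appear, and one must show these are lower-order (they gain derivatives/decay from the symbol $(1+\xi^2)^{-1}$) so that the coercive structure in \eqref{eq:virial-coercive-plan} survives. Getting the momentum-type density rather than a naive energy density is precisely what makes the transport coefficient clean, which is why \eqref{eq:virial-cone-plan} uses $uv+\partial_x u\,\partial_x v$ instead of the energy \eqref{eq:energia}.
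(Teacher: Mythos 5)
There is a genuine gap, and it sits exactly at the heart of your plan: the functional \eqref{eq:virial-cone-plan} you chose (the localized \emph{momentum} density $uv+\partial_x u\,\partial_x v$, i.e.\ $\mathcal J(t)$ in the paper's notation) does \emph{not} produce a sign-definite derivative, and your closing remark that the momentum density "is precisely what makes the transport coefficient clean" is backwards. Computing $\frac{d}{dt}\mathcal J$ with \eqref{IB} (this is \eqref{eq:J'}) one finds, besides the transport term $\frac{\sigma}{L}\int\varphi'(uv+\partial_xu\,\partial_xv)$, the intrinsically indefinite contribution $\frac{1}{2L}\int\varphi'\bigl(u^2+\tfrac{2}{p+1}|u|^{p+1}-v^2-(\partial_xv)^2\bigr)-\frac1L\int\varphi'\,u\IOp(u+|u|^{p-1}u)$. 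The $(u,v)$ block of the resulting quadratic form is $\tfrac12u^2+\sigma uv-\tfrac12 v^2$, whose determinant $-\tfrac14(1+\sigma^2)$ is negative for \emph{every} $\sigma$, so no choice of $|\sigma|>1$ makes it negative semi-definite; concretely, taking $v\equiv0$ and $u$ highly oscillatory, the term $\tfrac12\int\varphi'u^2-\int\varphi'u\IOp u\sim\tfrac12\int\varphi'\bigl((\partial_x^2f)^2-f^2\bigr)$ is strictly positive. In addition the cross term $\sigma\int\varphi'\partial_xu\,\partial_xv$ cannot be absorbed, since no $(\partial_xu)^2$ term with a favorable sign is generated anywhere. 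This is exactly why the paper calls the momentum identity "badly behaved"; it is only rescued (for the \emph{interior} problem, $\sigma=0$) by adding the correction $\mathcal N(t)$ of \eqref{N}, and even then one only controls $v^2+u\IOp u+|u|^{p+1}$, not $u^2+v^2+(\partial_xv)^2$ as \eqref{integrability0} requires.

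The proof of Theorem \ref{TH1} instead localizes the \emph{energy} density: with $\mathcal I(t)=\frac12\int\varphi\bigl(\frac{x+\sigma t}{L}\bigr)\bigl(u^2+v^2+(\partial_xv)^2+\tfrac{2}{p+1}|u|^{p+1}\bigr)$, identity \eqref{eq:I'} gives a transport term that is $\frac{\sigma}{2L}\int\varphi'$ times the \emph{full nonnegative energy density} (sign-definite once $\sigma\varphi'<0$), plus a single coupling term $-\frac1L\int\varphi'v\IOp(u+|u|^{p-1}u)$. Passing to the canonical variable $f=\IOp u$ and using the norm-equivalence Lemma \ref{lem:equiv_norm}, this coupling is just $-\frac1L\int\varphi'vf$ up to the nonlinear part, and Cauchy--Schwarz absorbs it into the transport term precisely because $|\sigma|>1$; the nonlinear remainder is handled with the comparison principle (Lemma \ref{lem:comparison}) and \eqref{smallness0}. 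Your other steps (uniform bound on the functional, integration in time) would then go through, but your passage from \eqref{integrability0} to \eqref{decay0} also needs repair: $I(t)$ is unbounded, so it cannot be covered by finitely many translates of $e^{-c_0|x+\sigma t|}$; the paper instead runs a monotonicity argument with the step-like weight $\varphi=\frac12(1+\tanh)$ evaluated along characteristics of two distinct super-luminal speeds, comparing the exterior energy at time $t_0$ with a quantity at time $2$ that vanishes as $t_0\to\infty$.
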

	
	\begin{rem}
	Note that Theorem \ref{TH1} is sharp, since it does not persist in the large data case. Indeed, solitons \eqref{eqn:soliton0} can be arbitrarily large and do not decay in the energy norm inside $I(t)$ as time tends to infinity. 
	\end{rem}
	
	\begin{rem}
	The smallness condition \eqref{smallness0} is needed in the proof to get a well-defined flow and good boundedness properties of the $L^\infty_{t,x}$ norm of $u$, and we do not know if it can be improved to $\Vert (u_0,v_0)\Vert_{L^2\times H^1}<\epsilon$ only. Note also that only the conditions $\Vert (u_0,v_0)\Vert_{L^2\times H^1}<\epsilon$ and $\sup_{t\in\R}\Vert (u(t),v(t))\Vert_{\dot H^1\times \dot H^2}<\epsilon$ are essentially needed in the proofs here.
	\end{rem}
	
	The proof Theorem \ref{TH1} follows the introduction of a new virial identity, in the spirit of the previous results by Martel and Merle \cite{MM1,MM2} in the gKdV case, and \cite{munoz-kwak,ACKM} in the BBM case. Note however that in those cases the functional involved is related to the mass ($L^2$ norm) of the solution. Here, we use instead a modification of the energy \eqref{eq:energia} of the solution.
	
	\medskip
	
	Having described the small data behavior in exterior light cones, we concentrate now in the interior light cone behavior. Here things are much more complicated, since the energy \eqref{eq:energia} is no more useful to describe the dynamics. Instead, we shall use a suitable modification of the momentum \eqref{eq:momentum}. 
	
	\begin{thm}[Full decay in interior regions]\label{TH2}
		Let $(u,v)$ be a global solution of \eqref{IB} in  the class $ C(\R,H^1\times H^2) \cap L^\infty(\R,H^1\times H^2)$, not necessarily small in norm. Then for any $L\gg 1$ we have
		\begin{equation}\label{TH2:integrability}
			\int_{2}^{\infty} \int_{-L}^L\left(v^2 + u (1-\partial_x^2)^{-1}u+ |u|^{p+1}\right) (t,x)dxdt \lesssim 1.
		\end{equation}
		Moreover, we have strong decay to zero in the energy space $(L^2\times H^1)(I)$, for any $I$ bounded interval in space:
		\begin{equation}\label{TH2:decay}
			\lim_{t\to \infty} \|(u,v)(t)\|_{(L^2\times H^1)(I)}=0.
		\end{equation}
	\end{thm}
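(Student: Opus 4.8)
\emph{Sketch of the proof.} Write $w=\IOp$, a positivity–preserving operator with kernel $\tfrac12e^{-|x-y|}$, set $f(u)=|u|^{p-1}u$ and $g=u+f(u)=u\big(1+|u|^{p-1}\big)$, so that \eqref{IB} reads $\partial_t u=\partial_x v$, $\partial_t v=\partial_x(wg)$; observe that $u\,g\ge0$ pointwise and $\partial_x^2w=w-1$. The plan is to run a virial argument in the spirit of Martel--Merle \cite{MM1,MM2} and Kowalczyk--Martel--Mu\~noz \cite{munoz-kwak,ACKM}, but — since the energy \eqref{eq:energia} no longer controls the interior dynamics — to localize a \emph{corrected momentum} instead of a mass or an energy. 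Fix $L\gg1$, choose a scale $A=A(L)\gg L$, and let $\phi=\phi_A$ be smooth, bounded and nondecreasing with $\phi_A'\ge0$ a positive bump of width $A$ (for instance $\phi_A(x)=\tanh(x/A)$), so that $|\phi_A^{(k)}|\lesssim A^{-(k-1)}\phi_A'$ for every $k\ge1$. The functional will be
\[
\mathcal I(t):=\int_\R \phi_A\,\big(uv+\partial_xu\,\partial_xv\big)\,dx\;+\;\tfrac12\int_\R \phi_A'\,u\,\partial_xv\,dx .
\]
Since $(u,v)\in L^\infty(\R;H^1\times H^2)$, the momentum density $uv+u_xv_x$ lies in $L^\infty_tL^1$, whence $\sup_t|\mathcal I(t)|\lesssim1$.

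The first step is the computation of $\tfrac{d}{dt}\mathcal I$. Differentiating the localized momentum, using \eqref{IB} and integrating by parts — the cross term $\int\phi_A'\,u_x\,wg$ cancels precisely because $\partial_x^2w=w-1$ — one finds
\[
\tfrac{d}{dt}\!\int\!\phi_A(uv+u_xv_x)=-\tfrac12\!\int\!\phi_A'(v^2+v_x^2)-\!\int\!\phi_A'\,u\,wg+\tfrac12\!\int\!\phi_A'u^2+\tfrac1{p+1}\!\int\!\phi_A'|u|^{p+1},
\]
whereas the correction satisfies $\tfrac{d}{dt}\big(\tfrac12\int\phi_A'uv_x\big)=\tfrac12\int\phi_A'\big(v_x^2+u\,wg-u^2-|u|^{p+1}\big)$, and is tailored precisely to cancel the ``wrong sign'' contributions $+\tfrac12\int\phi_A'u^2$ and $+\tfrac12\int\phi_A'v_x^2$. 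Adding, and then splitting $wg=g-(1-w)g$ with $(1-w)g=-\partial_x\big(w\,\partial_xg\big)$, yields a clean identity of the form
\[
\tfrac{d}{dt}\mathcal I(t)=-\tfrac12\int\phi_A'v^2-\tfrac12\int\phi_A'u^2+\Big(\tfrac1{p+1}-1\Big)\int\phi_A'|u|^{p+1}+\mathcal E_A(t),\qquad
\mathcal E_A(t)=\tfrac12\int\big(\phi_A''u+\phi_A'u_x\big)\,w(\partial_xg).
\]
Here the gain $-\tfrac12\int\phi_A'\,u\,g=-\tfrac12\int\phi_A'(u^2+|u|^{p+1})$ uses only the pointwise sign $u\,g\ge0$ of the nonlinearity, and the $|u|^{p+1}$ coefficient ends up equal to $\tfrac1{p+1}-1<0$ exactly because $p>1$. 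Moreover, since $w$ is positivity–preserving with exponentially decaying kernel, for $A$ large one has $\int\phi_A'u^2=\int\phi_A'\,u\,wu+\int\phi_A'\big((\partial_xwu)^2+(\partial_x^2wu)^2\big)-\tfrac12\int\phi_A'''(wu)^2\ge\tfrac12\int\phi_A'\,u\,wu\ge0$; this is the reason why only $u\,(1-\partial_x^2)^{-1}u$, and not $u^2$, survives in \eqref{TH2:integrability}.

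The heart of the matter is the control of the error $\mathcal E_A$, and here the absence of any smallness assumption makes things delicate. Writing $\partial_xg=(1+p|u|^{p-1})u_x$, the \emph{linear} part of $\mathcal E_A$ is $\tfrac12\int\phi_A'u_x\,\partial_xwu$, which — since $u_x=\partial_xwu-\partial_x(1-w)u$ and each derivative falling on $\phi_A$ gains a factor $A^{-1}$ — is reabsorbed into $-\tfrac12\int\phi_A'u^2$, at the cost of keeping only $\int\phi_A'\,u\,wu$ (rather than $\int\phi_A'u^2$) on the coercive side. The \emph{nonlinear} part of $\mathcal E_A$ is, modulo harmless $\phi_A''$ terms, $\tfrac p2\int\phi_A'\,u_x\,w\big(|u|^{p-1}u_x\big)$; to dominate it without smallness one must use simultaneously the positivity and exponential localization of $w$, the sign $|u|^{p-1}\ge0$, the uniform bounds $\|u\|_{L^\infty}\lesssim\|u\|_{H^1}\lesssim1$, and the freedom in the scale $A$, so as to bound $\mathcal E_A$ by a small multiple of $\int\phi_A'\big(v^2+u\,wu+|u|^{p+1}\big)$ plus an integrable–in–time remainder; the slack given by the strictly negative coefficient $\tfrac1{p+1}-1$ is what makes this absorption possible. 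I expect this step to be the principal technical obstacle. Granting it, one gets
\[
\tfrac{d}{dt}\mathcal I(t)\le-c\int\phi_A'\big(v^2+u\,wu+|u|^{p+1}\big)+(\text{integrable in }t),
\]
and integrating on $[2,T]$ while using $\sup_t|\mathcal I|\lesssim1$ gives $\int_2^T\!\!\int\phi_A'\big(v^2+u\,wu+|u|^{p+1}\big)\lesssim1$ uniformly in $T$; letting $T\to\infty$ and noting $\phi_A'\gtrsim_L1$ on $[-L,L]$ produces \eqref{TH2:integrability}.

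Finally, to upgrade \eqref{TH2:integrability} to the pointwise decay \eqref{TH2:decay}, pick a bounded interval $\tilde I\supset I$ and a smooth cutoff $\chi$ with $\chi\equiv1$ on $I$ and $\mathrm{supp}\,\chi\subset\tilde I$. Differentiating and using \eqref{IB}, the maps $t\mapsto\int\chi v^2(t)$ and $t\mapsto\int\chi|u|^{p+1}(t)$ are Lipschitz with constant controlled by $\|(u,v)\|_{L^\infty_t(H^1\times H^2)}$; since a nonnegative, uniformly continuous function with finite integral must tend to zero, \eqref{TH2:integrability} forces $\|v(t)\|_{L^2(\tilde I)}\to0$ and $\|u(t)\|_{L^{p+1}(\tilde I)}\to0$ as $t\to\infty$. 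Because $p>1$ yields $L^{p+1}(\tilde I)\hookrightarrow L^2(\tilde I)$ on bounded intervals, $\|u\|_{L^2(I)}\lesssim_I\|u\|_{L^{p+1}(I)}\to0$; and, by the interpolation $\|f\|_{L^2}^2\le\|f\|_{H^1}\|f\|_{H^{-1}}$ together with the uniform bound, $\|v_x\|_{L^2(I)}^2\le\|\chi v_x\|_{H^1}\|\chi v_x\|_{H^{-1}}\lesssim\|v\|_{H^2(\R)}\,\|v\|_{L^2(\tilde I)}\to0$. Combined with $\|v(t)\|_{L^2(I)}\to0$, this gives $\|(u,v)(t)\|_{(L^2\times H^1)(I)}\to0$, which is \eqref{TH2:decay}.
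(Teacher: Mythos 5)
Your functional is exactly the paper's $\mathcal J(t)+\mathcal N(t)$, and your two differential identities --- for the localized momentum and for the correction $\tfrac12\int\phi_A'\,u\,\partial_xv$ --- are both correct. The gap is the step you yourself flag. After obtaining these identities you split $wg=g-(1-w)g$ and integrate by parts, producing the error $\mathcal E_A=\tfrac12\int(\phi_A''u+\phi_A'u_x)\,w(\partial_xg)$, and you then assert without proof that $\mathcal E_A$ can be absorbed. There is no evident way to do this: its nonlinear part is essentially $\tfrac p2\int\phi_A'\,u_x\,w(|u|^{p-1}u_x)$, which is only bounded by $\|u\|_{L^\infty}^{p-1}\int\phi_A'u_x^2$; the local $L^2$ norm of $u_x$ is \emph{not} among the coercive quantities on the right-hand side (the method never yields time-integrability of $\int\phi_A'u_x^2$, nor even of $\int\phi_A'u^2$ or $\int\phi_A'(\partial_xv)^2$), and without a smallness hypothesis there is no small prefactor with which to absorb it --- the slack $\tfrac1{p+1}-1<0$ in front of $\int\phi_A'|u|^{p+1}$ is useless against a term quadratic in $u_x$. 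So, as written, the proof of \eqref{TH2:integrability} is incomplete at its central step.

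The repair is to not perform that splitting at all. Adding your two identities directly gives the exact, error-free identity
\begin{equation*}
-\dfrac{d}{dt}\mathcal I(t)=\frac12\int\phi_A'\,v^2+\frac12\int\phi_A'\,u\,(1-\partial_x^2)^{-1}u+\frac{p-1}{2(p+1)}\int\phi_A'\,|u|^{p+1}+\frac12\int\phi_A'\,u\,(1-\partial_x^2)^{-1}\big(|u|^{p-1}u\big),
\end{equation*}
in which every term is already nonnegative: the second by the canonical-variable computation you sketch (passing to $f=(1-\partial_x^2)^{-1}u$), and the fourth by the comparison principle for $(1-\partial_x^2)^{-1}$ (its kernel $\tfrac12e^{-|x-y|}$ is positive, so $u\ge0$ gives $(1-\partial_x^2)^{-1}(|u|^{p-1}u)\ge0$, and similarly for $u\le 0$). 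This last positivity is the one ingredient missing from your sketch, and it is precisely what replaces smallness; with it, \eqref{TH2:integrability} follows by integration in time exactly as you say. Your passage from \eqref{TH2:integrability} to \eqref{TH2:decay} is correct and is in fact a different, more elementary route than the paper's: you use uniform (Lipschitz) continuity in time of $\int\chi v^2$ and $\int\chi|u|^{p+1}$ together with H\"older's inequality on bounded intervals and $H^1$--$H^{-1}$ interpolation for $\partial_xv$, whereas the paper first extracts sequential decay along a sequence $t_n\uparrow\infty$ and then propagates it to all times via a second virial argument for the localized energy $\mathcal I(t)$; both work, and yours avoids that second monotonicity step.
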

	
	\begin{rem}
	Estimate \eqref{TH2:integrability} shows that the local $L^2$ norm of $v$ is integrable in time, and some mixed norms of $u$. Note however that $u$ seems not locally $L^2$ integrable in time. However, \eqref{TH2:decay} shows that this norm indeed decays to zero in time (even if it is not integrable in time). 
	\end{rem}
	
	\begin{rem}
	The fact that explicit higher regularity is needed in Theorem \ref{TH2} is certainly a consequence of a sort of quasilinear behavior of gIB. See also \cite{AM} for a similar behavior in a fully quasilinear model, the 1+1 dimensional Born-Infeld. In some sense, although gIB is well-posed, still shares some bad behavior coming from the originally ill-posed Bad Boussinesq model \eqref{Bad_Good}. Note that this phenomenon does not occur in the Good Boussinesq case \cite{MPP}. Also, Theorem \ref{TH2} can be read as ``boundedness in time in $H^1\times H^2$ implies $L^2\times H^1$ time decay in compact sets of space''.
	\end{rem}
	
	\begin{rem}
	Note that arbitrary size solitons \eqref{eqn:soliton0}-\eqref{Soliton} satisfy the hypotheses in Theorem \ref{TH2}. Hence, \eqref{TH2:decay} it is also true for large solutions.
	\end{rem}
		
	The techniques that we use to prove Theorem \ref{TH2} are not new, and have been used to show decay for the Born-Infeld equation \cite{AM}, the good Boussinesq system \cite{MPP}, the Benjamin-Bona-Mahony (BBM) equation \cite{munoz-kwak}, and more recently in the more complex $abcd$ Boussinesq system \cite{KMPP,KM}. In all these works, suitable virial functionals were constructed to show decay to zero in compact/not compact regions of space. 
	
	\medskip
	
	However, the case of gIB is different by several reasons: first of all, the small data long time dynamics in all the aforementioned models is \emph{not singular}, in the sense that using well-cooked virial identities, one always gets integrability in time of the whole associated energy norm. This is a nice property present in plenty of Hamiltonian models so far. The gIB case is different because this last property is not true at all: we only gets integrability in time of very particular portions of the $L^2\times H^1$ norm (see \eqref{TH2:integrability}). This fact complicates matters, since proving \eqref{TH2:decay} will require to prove additional estimates, not coming from the virial itself, but instead coming from tricky bounds and preservation of sign conditions under the nonlocal operator $(1-\partial_x^2)^{-1}$, namely the maximum principle. Second, finding the right virial identity for gIB was a very complicate process, since no clear notion of decay is shown by computing variations of energy and momentum. For instance, the derivation of a localized version of the momentum law (see \eqref{eq:J})
	\[
	 \J  = \Int{\R} \varphi\left(\dfrac{x}{L}\right)\left( uv+ \partial_x u \partial_x v\right) (t,x)dx, \qquad L\gg1 ,
	\]
	leads to the badly behaved identity (see \eqref{eq:Jp})
	\[
		\begin{aligned}
			\dfrac{d}{dt}\J  =  & ~ \frac{1}{2L}\Int{\R} \varphi'\left( u^2+\frac{2}{p+1}|u|^{p+1}-v^2- (\partial_x v)^2\right)dx \\
			& ~{} - \frac1L\Int{\R}\varphi'u(1-\partial^2_x)^{-1} (u+|u|^{p-1}u)dx.
	\end{aligned}
	\]
	No evidence of good sign conditions is clearly shown here. This identity, valid only for the gIB case, is far from being useful (actually, it is the first case among the above mentioned equations where it fails to give decay information). The key to prove decay is an additional term in the virial, called $\mathcal N(t)$ (see \eqref{N}-\eqref{eq:Np}), that allows us to recover the positivity of the virial in \eqref{eq:lyapunov_functional}:
	\[
		\begin{aligned}
			-\dfrac{d}{dt}(\mathcal{J}(t) +\mathcal{N}(t)) = &~ \frac 1{2L} \Int{\R} \varphi' v^2 dx+ \frac1{2L} \Int{\R} \varphi' u (1-\partial^2_x)^{-1} u dx\\
			& ~{}  +  
				\frac{p-1}{(p+1)L} \Int{\R} \varphi' |u|^{p+1} dx  + 	\frac1{2L}  \Int{\R} \varphi' u(1-\partial^2_x)^{-1} (|u|^{p-1}u)dx,
		\end{aligned}
	\]
	 and therefore the decay property. In that sense, we believe that estimate \eqref{eq:lyapunov_functional} is a true finding, since it is the only positivity property that we have found so far in gIB around compact sets. Note also that at this point we are not able to fully recover the decay of the $L^2$ norm of $u$ locally in space, but instead only a portion of it, expressed in terms of the estimate $\int_1^\infty \Int{\R} \varphi' u (1-\partial^2_x)^{-1} u (t,x) \,dx dt<+\infty$. In order to prove the more demanding decay property \eqref{TH2:decay}, we need additional estimates where the hypothesis $(u,v) \in L^\infty(\R,H^1\times H^2)$ seems to be essential. He have no direct clue about whether or not this condition is also necessary, but it seems to appear in several quasilinear models \cite{AM}. 
%

\subsection{More about solitons} One important question left open in this paper is the stability/instability of solitons \eqref{eqn:soliton0}-\eqref{Soliton}. But, as we shall explain below, this question is far from being trivial. However, we believe that part of the techniques introduced in this work could be useful to show a certain degree of (asymptotic) stability of the IB soliton.

\medskip

	Let us be more precise. In an influential work, Grillakis, Shatah, Strauss \cite{GSS} (GSS) obtained  sharp conditions for the orbital stability/instability of ground state solutions for a class of abstract Hamiltonian systems. This result was extended to another class of Hamiltonians of KdV type by Bona, Souganidis y Strauss \cite{BSS}.
	Hamiltonian systems as the ones considered in \cite{GSS} allow to introduce the Lyapunov functional $F:=H-cI$, where $H$ is the Hamiltonian and $I$ is functional generated by the translation invariance of the equation (usually, mass or momentum). Here, $c$ is the corresponding speed of the solitary wave. The stability of the solitary wave is then reduced to the understanding of the second variation of $F$, in the sense that $\partial^2 F>0$ leads to stability. Also, if the former positive condition is not satisfied, but the corresponding nonpositive manifold is spanned by two elements (directions) which are associated to the two degrees of freedom of the solitary waves (scaling and shifts), it is still is possible prove stability using $\partial^2 F$, but it is also necessary to restrict the class of perturbations to those which are orthogonal to the nonpositive directions.

	\medskip
	
	Smereka in \cite{Smereka} studied the soliton of IB \eqref{eq:IMBq} and observed that this soliton fits into the class of abstract Hamiltonian system studied by GSS. However, it is not possible to apply the GSS method since an important hypothesis is not satisfied. In fact, he observed that $\partial^2 F$ is nonpositive on an infinite number of directions, where two of them can be associated to the point spectrum, and the remaining with the continuous spectrum. Therefore, GSS is useless in this case. However, the same author showed numerical evidence that  if $dI(Q_c)/dc<0$, then the solitary waves are stable, and if $dI(Q_c)/dc>0$ the solitary waves seems to be unstable.
	
\medskip

	In a very important paper, Pego and Weinstein \cite{PW} proved (among other things) that $Q_c$ is linearly exponentially unstable in $H^1$ when
	\[
	1<c^2<\left(\dfrac{3(p-1)}{4+2(p-1)}  \right)^2,\quad \mbox{with }\  p>5.
	\]
	Their method combines the use of the Evans function as well as ODE techniques. They also showed \cite{PW_2} that the linear equation around $Q_c$ for $c\sim 1$ satisfies a {\bf convective stability} property, based in the similarity of IB with KdV for small speeds. This result has been successfully adapted to a more general setting by Mizumachi in a series of works \cite{Mizu1,Mizu2}. Whether or not the asymptotic stability results \`a la Martel-Merle \cite{MM1,MM2} can be applied to this case, is a challenging problem. An interesting result in this direction can be found in the recent work \cite{KMM_3}.

	\subsection*{Organization of this paper}	 This paper is organized as follows: Section \ref{Sect:2} deals with two new virial identities introduced in this paper. Section \ref{Sect:4} is devoted to the proof of Theorem \ref{TH1}. In Section \ref{Sect:5} we prove Theorem \ref{TH2}.

\subsection*{Acknowledgments} We thank C. Kwak and M. A. Alejo for several important remarks that helped to improve a first draft of this paper.

	\bigskip

	\section{Virial identities}\label{Sect:2}
	
	In this section we present two new virial identities for the gIB equation \eqref{IB}. One is related with the exterior light cone behavior (Theorem \ref{TH1}), and the other is useful for understanding the compact in space region (Theorem \ref{TH2}). 
	
	\medskip
	
	Let $L>0$ be a large parameter, and $\varphi=\varphi(x)$ be a smooth, bounded weight function, to be chosen later. For each $t,\sigma\in \R$ and $L>0$, we consider the following functionals.
	\begin{align}
	\mathcal I(t;L,\sigma)= \I =& \dfrac{1}{2}\Int{\R} \varphi\left(\dfrac{x+\sigma t}{L}\right)\left( u^2+v^2+(\partial_xv)^2+\frac{2}{p+1} |u|^{p+1}\right) (t,x)dx \label{eq:I}, \\
		\mathcal J(t;L,\sigma)= \J  =& \Int{\R} \varphi\left(\dfrac{x+\sigma t}{L}\right)\left( uv+ \partial_x u  \partial_x v\right) (t,x)dx \label{eq:J}.
	\end{align}
Note that both functionals are generalizations of the energy and momentum introduced in \eqref{eq:energia}-\eqref{eq:momentum}, and they well-defined if $(u,v)\in H^1\times H^2$. This fact is essential for the proofs, and it is the key ingredient in both Theorems \ref{TH1}-\ref{TH2}. The following result describes the time variation of both functionals. 
	\begin{lem}[Energy and Momentum local variations]\label{lem:I'}
		For any $t\in \R$, one has
		\begin{align}
			\dfrac{d}{dt}\I =&~{} \dfrac{\sigma}{2L}\Int{\R} \varphi'\left( u^2+v^2+(\partial_x v)^2+\frac{2}{p+1} |u|^{p+1}\right) dx  \nonumber\\
			&  {} -\frac{1}{L}\Int{\R} \varphi'v\IOp(u+|u|^{p-1}u) dx, \label{eq:I'}
			\end{align}
			and
			\begin{align}
			\dfrac{d}{dt}\J =&~{} \frac{\sigma}{L}\Int{\R} \varphi'\left( uv+\partial_x u \partial_x v\right) dx  +\frac{1}{2L}\Int{\R} \varphi'\left( u^2+\frac{2}{p+1}|u|^{p+1}-v^2-(\partial_x v)^2\right)dx \label{eq:J'}\\
			&-\frac{1}{L}
			\Int{\R}\varphi'u(1-\partial^2_x)^{-1} (u+|u|^{p-1}u)dx. \nonumber
		\end{align}
	\end{lem}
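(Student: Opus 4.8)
The plan is to establish both identities by a direct computation: differentiate under the integral sign, use the system \eqref{IB} to eliminate the time derivatives, and then reorganize the spatial integrals so that the only integrations by parts needed are ones the regularity $(u,v)\in H^1\times H^2$ actually permits. The single device that makes everything close is to introduce the auxiliary function
\[
w:=(1-\partial_x^2)^{-1}\big(u+|u|^{p-1}u\big), \qquad f:=u+|u|^{p-1}u,
\]
so that $w-\partial_x^2 w=f$ and, since $u\in H^1\hookrightarrow L^\infty$ in one dimension forces $f\in L^2$, one has $w\in H^2$. Because $(1-\partial_x^2)^{-1}$ commutes with $\partial_x$, the equations \eqref{IB} read simply $\partial_t u=\partial_x v$ and $\partial_t v=\partial_x w$.

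For $\mathcal I$ I would differentiate \eqref{eq:I}: the derivative hitting the weight yields $\frac{\sigma}{2L}\int\varphi'(\cdots)\,dx$, while $\frac12\partial_t$ of the density $u^2+v^2+(\partial_x v)^2+\frac{2}{p+1}|u|^{p+1}$ gives $u\partial_x v+v\partial_x w+\partial_x v\,\partial_x^2 w+|u|^{p-1}u\,\partial_x v=f\partial_x v+v\partial_x w+\partial_x v\,\partial_x^2 w$. Substituting $\partial_x^2 w=w-f$, the two $f\partial_x v$ terms cancel pointwise and what remains is $\int\varphi\,\partial_x(vw)\,dx$; a single integration by parts turns this into $-\frac1L\int\varphi' v w\,dx=-\frac1L\int\varphi' v(1-\partial_x^2)^{-1}(u+|u|^{p-1}u)\,dx$, which is \eqref{eq:I'}.

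For $\mathcal J$ the same procedure applied to \eqref{eq:J} produces $\frac{\sigma}{L}\int\varphi'(uv+\partial_x u\,\partial_x v)\,dx$ together with $\int\varphi\big(v\partial_x v+\partial_x v\,\partial_x^2 v+u\partial_x w+\partial_x u\,\partial_x^2 w\big)\,dx$. The first two summands are $\partial_x(\tfrac12 v^2)$ and $\partial_x(\tfrac12(\partial_x v)^2)$, so integration by parts converts them into the $-v^2-(\partial_x v)^2$ contribution. For the last two, insert $\partial_x^2 w=w-f$ to write $u\partial_x w+\partial_x u\,\partial_x^2 w=\partial_x(uw)-f\partial_x u$: the exact derivative contributes $-\frac1L\int\varphi' u(1-\partial_x^2)^{-1}(u+|u|^{p-1}u)\,dx$ after one integration by parts, while $f\partial_x u=\partial_x\big(\tfrac12 u^2+\tfrac1{p+1}|u|^{p+1}\big)$ contributes $\frac1{2L}\int\varphi'\big(u^2+\frac{2}{p+1}|u|^{p+1}\big)\,dx$ after another. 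Collecting all terms yields \eqref{eq:J'}.

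The only point that needs genuine care — and the one I expect to be the (mild) main obstacle — is the justification of these manipulations: differentiation under the integral sign and the vanishing of every boundary term. The reformulation through $w$ is precisely what makes the argument work at the stated regularity, since it never produces $\partial_x^2 u$, which need not exist for $u\in H^1$ alone: every surviving term is either a pointwise cancellation or the $x$-derivative of a function which, once multiplied by the bounded weight $\varphi$, belongs to $W^{1,1}(\R)$ — namely $vw$, $uw$, $u^2$, $|u|^{p+1}$, $v^2$, $(\partial_x v)^2$ — all controlled using $u\in H^1\cap L^\infty$, $v\in H^2$ and $w\in H^2$. If desired, one can first carry out the formal identity on smooth solutions and then pass to the limit by a standard approximation argument based on the local well-posedness theory of Liu \cite{Liu}.
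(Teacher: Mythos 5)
Your proof is correct and takes essentially the same route as the paper: differentiate under the integral sign, use the system \eqref{IB} to eliminate time derivatives, and integrate by parts once on exact derivatives; your pointwise substitution $\partial_x^2 w = w - f$ is just a repackaging of the paper's manipulation of the term $\mathcal{J}_3(t)$ via $(\varphi'u)_{xx} = -(1-\partial_x^2)(\varphi'u) + \varphi'u$, and both computations land on the same cancellations. (Minor cosmetic point: your $f = u + |u|^{p-1}u$ clashes with the paper's later use of $f$ for the canonical variable $(1-\partial_x^2)^{-1}u$.)
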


	\begin{proof}[Proof of Lemma \ref{lem:I'}] We have two identities to prove.

	\medskip
		{\it Proof of \eqref{eq:I'}}. We compute using \eqref{IB}:
		\begin{align*}
			\dfrac{d}{dt}\I =&~{} \dfrac{\sigma}{2L}\Int{\R} \varphi'\left( u^2+v^2+(\partial_x v)^2+\frac{2}{p+1} |u|^{p+1}\right) dx \\
			&  + \underbrace{\Int{\R} \varphi \left( u\partial_t u+v\partial_t v+\partial_x v\partial_{xt}^2 v+ |u|^{p-1}u\partial_t u\right) dx}_{\mathcal{I}_1(t)}.
		\end{align*}
		We deal first with the term $\mathcal I_1$. We have from \eqref{IB}
		\begin{align*}
			\mathcal{I}_1(t) =&\Int{\R} \varphi \left( u\partial_t u+v\partial_t v+\partial_x v\partial_{xt}^2 v+ |u|^{p-1}u\partial_t u\right) dx\\
			=& \Int{\R} \varphi \left( u+\partial_{xt}^2 v+ |u|^{p-1}u\right)\partial_t u dx+ \Int{\R} \varphi v\partial_t v dx\\
			=& \Int{\R} \varphi \partial_t u\IOp(u+|u|^{p-1}u) dx+ \Int{\R} \varphi v\IOp(u+|u|^{p-1}u)_{x} dx\\
			=& \Int{\R} \varphi \partial_t u\IOp(u+|u|^{p-1}u) dx- \Int{\R} \partial_x (\varphi v)\IOp(u+|u|^{p-1}u) dx\\
			=& \Int{\R} \left(\varphi \partial_t u-\dfrac{\varphi'v}{L}-\varphi \partial_x v\right)\IOp(u+|u|^{p-1}u) dx\\
			=& -\frac{1}{L}\Int{\R} \varphi'v\IOp(u+|u|^{p-1}u) dx.
		\end{align*}
		Finally, using this last identity, and replacing in the derivative of $\mathcal I(t)$, we obtain 
		\begin{align*}
			\dfrac{d}{dt}\I &= \dfrac{\sigma}{2L}\Int{\R} \varphi'\left( u^2+v^2+(\partial_x v)^2+\frac{2}{p+1} |u|^{p+1}\right) dx \\
			&~{} -\frac{1}{L}\Int{\R} \varphi'v\IOp(u+|u|^{p-1}u) dx,
		\end{align*}
		as desired. 
		
		\medskip
		
		{\it Proof of \eqref{eq:J'}.} The proof here is similar to the previous one. We have
		\begin{align*}
			\frac{d}{dt}\J&= \frac{\sigma}{L}\Int{\R} \varphi'\left( uv+\partial_x u \partial_x v\right) dx +\Int{\R} \varphi\left( \partial_t uv+u\partial_t v+\partial_{xt} u \partial_x v+\partial_x u\partial_{xt}^2 v\right) dx \\
			&= \frac{\sigma}{L}\Int{\R} \varphi'\left( uv+\partial_x u \partial_x v\right) dx +\Int{\R} \varphi\left( \partial_x vv+u\partial_t v+\partial_x^2 v \partial_x v+\partial_x u\partial_{xt}^2 v\right) dx \\
			&= \frac{\sigma}{L}\Int{\R} \varphi'\left( uv+\partial_x u \partial_x v\right) dx +\frac{1}{2}\Int{\R} \varphi \partial_x \left( v^2+(\partial_x v)^2\right) dx\\
			&~{} \quad +\Int{\R} \varphi \left(u\partial_t v+\partial_x u\partial_{xt}^2 v\right) dx \\
			&=\frac{\sigma}{L}\Int{\R} \varphi'\left( uv+\partial_x u \partial_x v\right) dx -\frac{1}{2L}\Int{\R} \varphi'\left( v^2+(\partial_x v)^2\right)dx \\
			&~{} \quad  +\underbrace{\Int{\R} \varphi \left(u\partial_t v+\partial_x u\partial_{xt}^2 v\right) dx.}_{\mathcal{J}_2(t)} \\
		\end{align*}
		Now, integrating by parts,
		\begin{align*}
			\mathcal{J}_2(t)
			&= \Int{\R} \varphi u\partial_t v dx-\Int{\R} \partial_x (\varphi \partial_{xt}^2 v)  u dx=  \Int{\R} \varphi u\partial_t v dx- \frac{1}{L}\Int{\R}\varphi' \partial_{xt}^2 vudx-\Int{\R}\varphi \partial_{txx}^3 v  u dx\\
			&=  \Int{\R} \varphi u(1-\partial_x^2)\partial_t v dx- \frac{1}{L}\Int{\R}\varphi' \partial_{xt}^2 vudx\\
			&=  \Int{\R} \varphi u(1-\partial_x^2)(1-\partial_x^2)^{-1} \partial_x (u+|u|^{p-1}u) dx- \frac{1}{L}\Int{\R}\varphi' \partial_{xt}^2 vudx\\
			&=  \Int{\R} \varphi u \partial_x (u+|u|^{p-1}u) dx- \frac{1}{L}\Int{\R}\varphi' \partial_{xt}^2 vudx\\
			&=  \Int{\R} \varphi \partial_x \left(\frac{1}{2}u^2+\frac{p}{p+1}|u|^{p+1}\right) dx- \frac{1}{L}\Int{\R}\varphi'u \partial_{xt}^2 vdx\\
			&=  -\frac{1}{L}\Int{\R} \varphi' \left(\frac{1}{2}u^2+\frac{p}{p+1}|u|^{p+1}\right) dx  - \frac{1}{L}\Int{\R}\varphi'u (1-\partial^2_x)^{-1}\partial_x^2 (u+|u|^{p-1}u)dx\\
			&=  -\frac{1}{L}\Int{\R} \varphi' \left(\frac{1}{2}u^2+\frac{p}{p+1}|u|^{p+1}\right) dx   -\frac{1}{L} \underbrace{\Int{\R} \partial_x^2 (\varphi'u) (1-\partial^2_x)^{-1} (u+|u|^{p-1}u)dx.}_{\mathcal{J}_3(t)}\\
		\end{align*}
		We consider now the term $\mathcal J_3(t)$:
		\begin{align*}
			\mathcal{J}_3(t)
			&= \Int{\R}((\varphi'u)_{xx}-\varphi'u +\varphi'u)(1-\partial^2_x)^{-1} (u+|u|^{p-1}u)dx\\
			&=  -\Int{\R}(1-\partial^2_{x})(\varphi'u)(1-\partial^2_x)^{-1} (u+|u|^{p-1}u)dx   +
			\Int{\R}\varphi'u(1-\partial^2_x)^{-1} (u+|u|^{p-1}u)dx\\
			&=  -\Int{\R} \varphi'(u^2+|u|^{p+1})dx+
			\Int{\R}\varphi'u(1-\partial^2_x)^{-1} (u+|u|^{p-1}u)dx.
		\end{align*}
		Therefore,
		\begin{align*}
			\frac{d}{dt}\J & =  \frac{\sigma}{L}\Int{\R} \varphi'\left( uv+\partial_x u \partial_x v\right) dx -\frac{1}{2L}\Int{\R} \varphi'\left( v^2+(\partial_x v)^2\right)dx \\
			&   -\frac{1}{L}\Int{\R} \varphi' \left(\frac{1}{2}u^2+\frac{p}{p+1}|u|^{p+1}\right) dx  +\frac{1}{L} \Int{\R} \varphi'(u^2+|u|^{p+1})dx\\
			&- \frac{1}{L}\Int{\R}\varphi'u(1-\partial^2_x)^{-1} (u+|u|^{p-1}u)dx \\
			=&  \frac{\sigma}{L}\Int{\R} \varphi'\left( uv+\partial_x u \partial_x v\right) dx -\frac{1}{2L}\Int{\R} \varphi'\left( v^2+(\partial_x v)^2\right)dx  \\
			& +\frac{1}{L}\Int{\R} \varphi' \left(\frac{1}{2}u^2+\frac{1}{p+1}|u|^{p+1}\right) dx -\frac{1}{L}
			\Int{\R}\varphi'u(1-\partial^2_x)^{-1} (u+|u|^{p-1}u)dx\\
			=&  \frac{\sigma}{L}\Int{\R} \varphi'\left( uv+\partial_x u \partial_x v\right) dx   +\frac{1}{2L}\Int{\R} \varphi'\left( u^2+\frac{2}{p+1}|u|^{p+1}-v^2-(\partial_x v)^2\right)dx\\
			&  -\frac{1}{L}
			\Int{\R}\varphi'u(1-\partial^2_x)^{-1} (u+|u|^{p-1}u)dx.
		\end{align*}
	\end{proof}  
	
	We introduce now a second functional. Let
	
	\begin{equation}\label{N}
		\mathcal{N}(t) := \frac1{2L}\Int{\R} \varphi' \left(\frac{x}{L}\right) u \partial_x v dx.
	\end{equation}
	
	\begin{lem}\label{lem:N}
		We have for $ \varphi' = \varphi' \left(\frac{x}{L}\right) $,
		\begin{equation}\label{eq:Np}
			\begin{aligned}
				\dfrac{d}{dt}\mathcal{N}(t) = &~   \frac1{2L}\Int{\R} \varphi'  \Big( (\partial_x v)^2 -  u^2 + u(1-\partial^2_x)^{-1}u \Big)dx\\
				&~{}    + \frac1{2L}\Int{\R} \varphi'  \Big(  - |u|^{p+1} +  u(1-\partial^2_x)^{-1}(|u|^{p-1}u) \Big)dx.
			\end{aligned}
		\end{equation}
	\end{lem}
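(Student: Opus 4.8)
The plan is to compute $\frac{d}{dt}\mathcal N(t)$ directly from the definition \eqref{N} using the system \eqref{IB}, in the same spirit as the proof of Lemma \ref{lem:I'}. Since $\varphi'=\varphi'(x/L)$ has no explicit time dependence here (the shift $\sigma t$ is absent in $\mathcal N$), differentiating under the integral gives
\[
\frac{d}{dt}\mathcal N(t) = \frac1{2L}\Int{\R}\varphi'\left(\partial_t u\,\partial_x v + u\,\partial_{xt}^2 v\right)dx.
\]
Using the first equation $\partial_t u = \partial_x v$, the first term becomes $\frac1{2L}\int_\R \varphi'\,\partial_x v\,\partial_x v\,dx = \frac1{2L}\int_\R \varphi' (\partial_x v)^2 dx$, which already produces the leading term in \eqref{eq:Np}. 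For the second term I would use the second equation, noting $\partial_t v = (1-\partial_x^2)^{-1}\partial_x(u+|u|^{p-1}u)$, so $\partial_{xt}^2 v = (1-\partial_x^2)^{-1}\partial_x^2(u+|u|^{p-1}u)$. Then $\int_\R \varphi' u\,\partial_{xt}^2 v\,dx = \int_\R \varphi' u\,(1-\partial_x^2)^{-1}\partial_x^2(u+|u|^{p-1}u)\,dx$.

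The key algebraic manipulation is to rewrite $(1-\partial_x^2)^{-1}\partial_x^2 = (1-\partial_x^2)^{-1}\big((\partial_x^2-1)+1\big) = -\mathrm{Id} + (1-\partial_x^2)^{-1}$, exactly the identity used for $\mathcal J_3(t)$ in the proof of Lemma \ref{lem:I'}. Applying this,
\[
\Int{\R}\varphi' u\,(1-\partial_x^2)^{-1}\partial_x^2(u+|u|^{p-1}u)\,dx = -\Int{\R}\varphi' u\,(u+|u|^{p-1}u)\,dx + \Int{\R}\varphi' u\,(1-\partial_x^2)^{-1}(u+|u|^{p-1}u)\,dx.
\]
The first integral on the right is $-\int_\R \varphi'(u^2 + |u|^{p+1})\,dx$. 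Collecting everything, multiplying by $\frac1{2L}$, and splitting the linear and nonlinear contributions of $u+|u|^{p-1}u$ into the two lines of \eqref{eq:Np} yields exactly the claimed identity. One should be slightly careful that no boundary terms appear: since $(u,v)\in H^1\times H^2$ and $\varphi'$ is bounded with bounded derivatives, all the integrations by parts implicit in moving $\partial_x$ around (and in the self-adjointness of $(1-\partial_x^2)^{-1}$) are justified, and the quantities are finite.

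I do not expect a serious obstacle here; the computation is short and the only subtle point is the operator identity $(1-\partial_x^2)^{-1}\partial_x^2 = -\mathrm{Id}+(1-\partial_x^2)^{-1}$, which is precisely the trick already exploited in the previous lemma. The mild bookkeeping issue is to make sure that $u\,\partial_{xt}^2 v$ is integrated against $\varphi'$ without an extra integration by parts (unlike in $\mathcal J_2(t)$, where the weight $\varphi$ rather than $\varphi'/L$ forced a further integration by parts producing the $\varphi'$ and $\varphi''$-type terms). Here the structure of $\mathcal N$ is designed so that the $\varphi'$ weight is already in place, and the identity closes cleanly with exactly the four terms listed in \eqref{eq:Np}.
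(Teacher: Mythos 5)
Your computation is correct and follows essentially the same route as the paper: differentiate under the integral, substitute $\partial_t u=\partial_x v$ for the first term, and for the second use $\partial_{xt}^2 v=(1-\partial_x^2)^{-1}\partial_x^2(u+|u|^{p-1}u)$ together with the identity $(1-\partial_x^2)^{-1}\partial_x^2=-\mathrm{Id}+(1-\partial_x^2)^{-1}$, which is exactly the $\partial_x^2=(\partial_x^2-1)+1$ trick the paper employs. No gaps.
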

	
	\begin{proof}
		We compute using \eqref{IB}
		\begin{align*}
			 2L\dfrac{d}{dt}\mathcal{N}(t) &=  \dfrac{d}{dt} \Int{\R} \varphi' u \partial_x v dx =  \Int{\R}\varphi' (\partial_t u \partial_x v+u \partial_{xt} v)dx\\
			&=  \Int{\R} \varphi' ( (\partial_x v)^2+u(1-\partial^2_x)^{-1}\partial_x^2 (u+|u|^{p-1}u))dx\\
			&=  \Int{\R} \varphi'  (\partial_x v)^2dx+\Int{\R} \varphi' u(1-\partial^2_x)^{-1} (|u|^{p-1}u)dx\\
			& \quad   +\Int{\R} \varphi'  u(1-\partial^2_x)^{-1}(\partial_{x}^2u-u+u+\partial_{x}^2(|u|^{p-1}u))dx\\
			&=  \Int{\R} \varphi'  (\partial_x v)^2dx -\Int{\R} \varphi'  u^2dx +\Int{\R} \varphi'  u(1-\partial^2_x)^{-1}udx\\
			&  \quad  	+\Int{\R} \varphi'  u(1-\partial^2_x)^{-1}(\partial_{x}^2-1+1)(|u|^{p-1}u)dx\\
			&=  \Int{\R} \varphi'  (\partial_x v)^2dx -\Int{\R} \varphi'  u^2dx +\Int{\R} \varphi'  u(1-\partial^2_x)^{-1}u dx\\
			& \quad    - \Int{\R} \varphi' |u|^{p+1}dx +\Int{\R} \varphi'  u(1-\partial^2_x)^{-1}(|u|^{p-1}u)dx.
		\end{align*}
		The final result arrives after multiplication by $\frac1{2L}$.
	\end{proof}
	
	\bigskip

	\section{Decay in exterior light cones. Proof of Theorem \ref{TH1}}\label{Sect:4}
	
	In this Section we prove Theorem \ref{TH1}. Recall that we have from \eqref{eq:I'}:
	\begin{align}\label{eqn:2p3}
	\dfrac{d}{dt}\I =&~{}  \dfrac{\sigma}{2L}\Int{\R} \varphi'\left( u^2+v^2+(\partial_x v)^2+\frac{2}{p+1} |u|^{p+1}\right) dx \nonumber\\
	&~{} -\frac{1}{L}\Int{\R} \varphi'v\IOp(u+|u|^{p-1}u)dx.
	\end{align}
	In what follows, fix $\sigma\in \R$ such that $|\sigma|>1$. Controlling this last term requires some work. Indeed, we shall need the following definition (see \cite{Dika,KMPP,munoz-kwak} and references therein for more details)
	\begin{defn}[Canonical variable]
	Let $u\in L^2$ be a fixed function. We say that $f$ is canonical variable for $u$ if $f$ uniquely solves the equation
 	 \begin{equation}
		\Op f=u,\ \ f\in H^2(\R). \label{eq:cv_canonical}
	 \end{equation}
	In this case, we denote $f=\IOp u$.
	\end{defn}
	Using  $f$ as canonical variable for $u$, we obtain the following result:
	\begin{lem}\label{lem:passage_canonvar}
	  One has
	   \begin{align}
		   	\Int{\R}\varphi' u^2dx =\Int{\R}\varphi' (f^2+2(\partial_{x} f)^2+(\partial_{x}^2 f)^2)dx-\dfrac{1}{L^2}\Int{\R}\varphi'''f^2dx, \label{eq:u2_canonical}
		   \end{align}
		   and
		   	\begin{align}
			\Int{\R} \varphi'v(1-\partial_{x}^2)^{-1}(u+|u|^{p-1}u) dx =	\Int{\R} \varphi'v(f+(1-\partial_{x}^2)^{-1}|u|^{p-1}u) dx.\label{eq:vIOp_up}
     \end{align}
	\end{lem}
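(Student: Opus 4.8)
The plan is to derive both identities directly from the defining relation of the canonical variable, $u=\Op f = f-\partial_x^2 f$, together with the linearity of $\IOp$; no analytic estimate is really involved here, only integration by parts and bookkeeping of the $L^{-1}$ factors produced by differentiating the scaled weight.

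For \eqref{eq:vIOp_up} I would argue in one line: since $\IOp$ is linear and, by \eqref{eq:cv_canonical}, $\IOp u = f$, we have $\IOp(u+|u|^{p-1}u) = f + \IOp(|u|^{p-1}u)$. Multiplying by $\varphi' v$ and integrating over $\R$ gives the claim at once. All the quantities involved are finite since $f\in H^2$, the nonlinear term $|u|^{p-1}u\in L^2$ thanks to the bound $\||u|^{p-1}u\|_{L^2}\le \|u\|_{L^\infty}^{p-1}\|u\|_{L^2}$ coming from $H^1\hookrightarrow L^\infty$, and $v\in H^1$.

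For \eqref{eq:u2_canonical} I would substitute $u=f-\partial_x^2 f$, so $u^2 = f^2 - 2f\,\partial_x^2 f + (\partial_x^2 f)^2$, and integrate the cross term by parts twice. Writing $\psi$ for the scaled weight $\varphi'$, so that $\partial_x\psi=\tfrac1L\varphi''$ and $\partial_x^2\psi=\tfrac1{L^2}\varphi'''$, a first integration by parts gives
\[
-2\Int{\R}\psi\, f\,\partial_x^2 f\,dx = 2\Int{\R}\partial_x(\psi f)\,\partial_x f\,dx = 2\Int{\R}\psi\,(\partial_x f)^2\,dx + 2\Int{\R}(\partial_x\psi)\, f\,\partial_x f\,dx,
\]
and a second one, using $2f\,\partial_x f=\partial_x(f^2)$, yields $2\Int{\R}(\partial_x\psi)\, f\,\partial_x f\,dx = -\Int{\R}(\partial_x^2\psi)\,f^2\,dx = -\tfrac1{L^2}\Int{\R}\varphi'''\,f^2\,dx$. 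Adding the $f^2$ and $(\partial_x^2 f)^2$ contributions coming from $u^2$ gives precisely $\Int{\R}\varphi'(f^2+2(\partial_x f)^2+(\partial_x^2 f)^2)\,dx - \tfrac1{L^2}\Int{\R}\varphi''' f^2\,dx$, which is \eqref{eq:u2_canonical}.

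The only points deserving a word of justification are the vanishing of the boundary terms at $\pm\infty$ in the two integrations by parts and the integrability of each integrand: both hold because $f\in H^2(\R)$ implies $f$ and $\partial_x f$ are continuous and decay to $0$ at infinity while $\partial_x^2 f\in L^2$, and $\varphi$ is smooth with all derivatives bounded, so $\psi f\,\partial_x f$, $(\partial_x\psi) f^2$, $\psi(\partial_x f)^2$ and $\psi(\partial_x^2 f)^2$ all belong to $L^1(\R)$. I do not expect any genuine obstacle: the proof is essentially the algebraic expansion of $\Op$ acting on $f$ together with careful tracking of the $L^{-1}$ and $L^{-2}$ factors.
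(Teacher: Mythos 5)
Your proof is correct and follows essentially the same route as the paper: expand $u^2=(f-\partial_x^2 f)^2$ and integrate the cross term by parts twice, picking up the $-\tfrac1{L^2}\Int{\R}\varphi''' f^2\,dx$ correction, while \eqref{eq:vIOp_up} is immediate from the linearity of $\IOp$ and $\IOp u=f$. The bookkeeping of the $L^{-1}$ factors and the justification of vanishing boundary terms are both accurate.
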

	\begin{proof}
	Computing, 
	\begin{align*}
		\Int{\R}\varphi' u^2dx=\Int{\R}\varphi' (f-\partial_{x}^2 f)^2dx
		=\Int{\R}\varphi' (f^2+(\partial_{x}^2 f)^2-2f\partial_{x}^2 f)dx.
	\end{align*}
	Integrating by parts, we have
	\begin{align*}
		\Int{\R}\varphi'f\partial_{x}^2 fdx
		= -\Int{\R}\varphi'  (\partial_x f)^2dx+\dfrac{1}{2L^2}\Int{\R}\varphi''' f^2dx.
	\end{align*}
	Therefore,
	\begin{align*}
		\Int{\R}\varphi' u^2dx=\Int{\R}\varphi' (f^2+2(\partial_{x} f)^2+(\partial_{x}^2 f)^2)dx-\dfrac{1}{L^2}\Int{\R}\varphi'''f^2dx.
	\end{align*}
This proves \eqref{eq:u2_canonical}. The proof of \eqref{eq:vIOp_up} is direct. 
	\end{proof}
	Using Lemma \ref{lem:passage_canonvar} we  can rewrite \eqref{eqn:2p3} as follows:
	\begin{align*}
		\frac{d}{dt}\mathcal{I}(t)
		= &~{} \mathcal{Q}(t)+\mathcal{SQ}(t)+\mathcal{PQ}(t),
	\end{align*}
	where
	\begin{align}
		\mathcal{Q}(t)&:=   \dfrac{\sigma}{2L}\Int{\R} \varphi'\left( f^2+2(\partial_{x} f)^2+(\partial_{x}^2 f)^2+v^2+(\partial_x v)^2\right)dx 
		-\frac{1}{L}\Int{\R} \varphi'vf dx, \label{eq:Q}\\
		\mathcal{SQ}(t)&:= -\dfrac{\sigma}{2L^3}\Int{\R}\varphi'''f^2 dx, \label{eq:SQ}\\
		\mathcal{PQ}(t)&:=  \dfrac{\sigma}{L(p+1)}\Int{\R}\varphi' |u|^{p+1}dx-\frac{1}{L}\Int{\R} \varphi'v(1-\partial^{2}_x)^{-1}|u|^{p-1}u dx.\label{eq:PQ}
	\end{align}
	
	Now we are ready to prove a first virial estimate.
	
	\begin{lem}
	Assume $\sigma=-(1+b)<-1$ and $\varphi=\tanh$. Then one has
		\begin{equation}
			\mathcal{Q}(t)\lesssim_{L,b} -\Int{\R} \varphi'\left( u^2+v^2+(\partial_x v)^2\right)dx. \label{eq:sim_norm}
		\end{equation}
		Similarly, assume now $\sigma=1+a>1$ and $\varphi =-\tanh $. Then one has
		\begin{equation}
			\mathcal{Q}(t)\lesssim_{L,a} -\Int{\R} |\varphi'|\left( u^2+v^2+(\partial_x v)^2\right)dx. \label{eq:sim_norm_1}
		\end{equation}
	\end{lem}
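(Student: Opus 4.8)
The plan is to show that the quadratic form $\mathcal{Q}(t)$ is coercive with the right sign, by treating the $(f, \partial_x f, \partial_x^2 f, v, \partial_x v)$ variables as an abstract quadratic form with weight $\varphi'$. First I would fix the case $\sigma = -(1+b) < -1$ with $\varphi = \tanh$, so that $\varphi' = \sech^2 > 0$, and $\varphi'$ decays exponentially; crucially $|\sigma| = 1 + b > 1$. Writing out $\mathcal{Q}(t)$ from \eqref{eq:Q}, the only indefinite piece is the cross term $-\frac1L \int \varphi' v f\, dx$, since all other terms carry the factor $\frac{\sigma}{2L}$ with $\sigma < 0$, hence are manifestly negative (after the overall sign). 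So the whole game is to absorb the cross term. By Young's inequality, for any $\mu > 0$,
\[
\left| \frac1L \Int{\R} \varphi' v f\, dx \right| \le \frac{\mu}{2L} \Int{\R} \varphi' v^2\, dx + \frac{1}{2\mu L} \Int{\R} \varphi' f^2\, dx.
\]
Since $|\sigma| > 1$, I can choose $\mu \in (1, |\sigma|)$, say $\mu$ close to $1$; then the coefficient in front of $\int \varphi' v^2$ is $\frac{1}{2L}(-|\sigma| + \mu) < 0$, and the coefficient in front of $\int \varphi' f^2$ is $\frac{1}{2L}(-|\sigma| + \tfrac1\mu) < 0$ as well (because $\tfrac1\mu < 1 < |\sigma|$). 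The terms $\int \varphi' (\partial_x f)^2$, $\int \varphi' (\partial_x^2 f)^2$, $\int \varphi' (\partial_x v)^2$ keep their negative coefficient $\frac{\sigma}{2L}$ (or $\frac{\sigma}{L}$) untouched. Hence
\[
\mathcal{Q}(t) \le -\frac{c}{L}\Int{\R} \varphi'\big( f^2 + (\partial_x f)^2 + (\partial_x^2 f)^2 + v^2 + (\partial_x v)^2 \big)\, dx
\]
for some $c = c(b) > 0$.

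The remaining step is to convert the bound in terms of $f$ (the canonical variable) back into a bound in terms of $u$. From \eqref{eq:u2_canonical} we have
\[
\Int{\R}\varphi' u^2\, dx = \Int{\R}\varphi'\big(f^2 + 2(\partial_x f)^2 + (\partial_x^2 f)^2\big)\, dx - \frac{1}{L^2}\Int{\R}\varphi''' f^2\, dx,
\]
so $\int \varphi' u^2 \lesssim \int \varphi'(f^2 + (\partial_x f)^2 + (\partial_x^2 f)^2) + \frac{1}{L^2}\int |\varphi'''| f^2$. Since $\varphi = \tanh$, one has the pointwise comparison $|\varphi'''| \lesssim \varphi'$ (all are Schwartz-type combinations of $\sech^2$ and $\tanh$), so the correction term $\frac{1}{L^2}\int |\varphi'''| f^2 \lesssim \frac{1}{L^2}\int \varphi' f^2$ is, for $L$ large, absorbable into the leading negative term $-\frac{c}{L}\int \varphi' f^2$; this is exactly where the "$\lesssim_{L,b}$" dependence (and the need for $L$ large relative to $b$) comes from. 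Therefore $-\int \varphi' u^2 \lesssim -\int \varphi'(f^2 + (\partial_x f)^2 + (\partial_x^2 f)^2) + (\text{absorbable})$, which upgrades the $f$-coercivity to
\[
\mathcal{Q}(t) \lesssim_{L,b} -\Int{\R}\varphi'\big(u^2 + v^2 + (\partial_x v)^2\big)\, dx,
\]
as claimed in \eqref{eq:sim_norm}.

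For the second statement, $\sigma = 1 + a > 1$ and $\varphi = -\tanh$, so $\varphi' = -\sech^2 < 0$ and $|\varphi'| = \sech^2$. Now in $\mathcal{Q}(t)$ the sign of $\sigma$ is positive but $\varphi'$ is negative, so $\frac{\sigma}{2L}\int \varphi'(\cdots) = -\frac{\sigma}{2L}\int |\varphi'|(\cdots)$ is again negative-definite on the square terms, and the cross term $-\frac1L\int \varphi' v f = \frac1L \int |\varphi'| v f$ is handled by the identical Young's inequality argument using $|\sigma| > 1$. The canonical-variable identity \eqref{eq:u2_canonical} holds verbatim with $\varphi' \to \varphi'$ (it is an algebraic identity), and $|\varphi'''| \lesssim |\varphi'|$ still holds for $\varphi = -\tanh$; so the same absorption gives \eqref{eq:sim_norm_1}. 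I expect the only genuinely delicate point to be the bookkeeping in the absorption step: one must check that after using Young's inequality with an admissible $\mu$ the leftover negative coefficients are bounded below by a positive constant depending only on $a$ (resp. $b$), uniformly, and then choose $L = L(a)$ (resp. $L(b)$) large enough that the $\frac{1}{L^2}\int|\varphi'''|f^2$ term does not eat up this margin — everything else is routine integration by parts and pointwise weight comparisons.
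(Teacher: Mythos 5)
Your proposal is correct and follows essentially the same route as the paper: absorb the cross term $-\frac1L\int\varphi' vf$ by Young's inequality using $|\sigma|>1$ (the paper takes $\mu=1$, leaving the margin $-b/(2L)$, while you take $\mu\in(1,|\sigma|)$ — both work), and then convert the resulting coercivity in $(f,\partial_xf,\partial_x^2f)$ back to $u$. The only difference is at that last step: the paper invokes the quoted equivalence-of-norms Lemma \ref{lem:equiv_norm} with $\lambda=L^{-1}$, whereas you derive the one inequality actually needed, $\int\varphi' u^2\lesssim\int\varphi'(f^2+(\partial_xf)^2+(\partial_x^2f)^2)$, directly from the identity \eqref{eq:u2_canonical} together with the pointwise bound $|\varphi'''|\lesssim|\varphi'|$ valid for $\varphi=\pm\tanh$; this is a legitimate, self-contained substitute since only that direction of the equivalence enters the proof of \eqref{eq:sim_norm} and \eqref{eq:sim_norm_1}.
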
	
	
	\begin{proof}
	First we prove \eqref{eq:sim_norm}. We concentrate on $\mathcal{Q}(t)$ in \eqref{eq:Q}. Note that, if $\varphi'> 0$, we have
	\begin{align*}
		\left|\Int{\R} \varphi'vf dx\right| 
		\leq \frac{1}{2}\Int{\R} \varphi'v^2dx+\frac{1}{2}\Int{\R} \varphi'f^2dx.
	\end{align*}
	Consequently, if $b>0$, $\sigma:=-(1+b)<-1$, and $\varphi=\tanh$, we have in \eqref{eq:Q}
		\begin{align*}
			\mathcal{Q}(t)&\leq  \dfrac{\sigma}{2L}\Int{\R} \varphi'\left( f^2+2(\partial_{x} f)^2+(\partial_{x}^2 f)^2+v^2+(\partial_x v)^2\right)dx 
			\\
			& \qquad +\frac{1}{2L}\Int{\R} \varphi'v^2dx+\frac{1}{2L}\Int{\R} \varphi'f^2dx\\
			&=  \dfrac{\sigma+1}{2L}\Int{\R} \varphi'\left( f^2+v^2\right)dx+\dfrac{\sigma}{2L}\Int{\R} \varphi'\left(2(\partial_{x} f)^2+(\partial_{x}^2 f)^2+(\partial_x v)^2\right)dx \\
			&=  \dfrac{-b}{2L}\Int{\R} \varphi'\left( f^2+v^2\right)dx-\dfrac{(1+b)}{2L}\Int{\R} \varphi'\left(2(\partial_{x} f)^2+(\partial_{x}^2 f)^2+(\partial_x v)^2\right)dx.
		\end{align*}
		Now we need the following result about equivalence of norms in terms of $f$ and $u$.
		
		\begin{lem}[\cite{munoz-kwak,KMPP}]\label{lem:equiv_norm}
			Let $f$ be as in \eqref{eq:cv_canonical}. Let $\varphi$  be a smooth, bounded weight function satisfying $\vert \varphi'\vert\leq \lambda \varphi$ for some small but fixed $0<\lambda\ll 1$. Then,  for any $a_1,a_2,a_3>0$, there exist $c_1,C_1>0$, depending of $a_j$ and $\lambda>0$, such that
			\begin{align}
			c_1\Int{\R} \varphi u^2dx\leq \Int{\R} \varphi\left(  a_1 f^2+a_2 (\partial_{x} f)^2+a_3 (\partial_{x}^2 f)^2\right) dx\leq C_1\Int{\R} \varphi u^2 dx.
			\end{align}
		\end{lem}
		Using Lemma \ref{lem:equiv_norm} with $\lambda=L^{-1}\ll 1$, we conclude
		\begin{align}
			\mathcal{Q}(t)\lesssim_{L,b} &~{}  -\Int{\R} \varphi'(f^2+(\partial_{x} f)^2+(\partial_{x}^2 f)^2 +v^2+(\partial_x v)^2) \nonumber\\
			\sim &~{}-\Int{\R} \varphi'\left( u^2+v^2+(\partial_x v)^2\right).\label{I'_decreasing}
		\end{align}
		This proves \eqref{eq:sim_norm}. Now we sketch the proof of \eqref{eq:sim_norm_1}, which is similar to the previous case. Set $\sigma=1+a$, $a>0$. Choosing $\varphi=-\tanh$ it is clear that $\varphi'=-\mbox{sech}^2<0$. From \eqref{eq:Q} we have
		\begin{align*}
			\mathcal{Q}(t)&= -\dfrac{|\sigma|}{2L}\Int{\R} \vert \varphi'\vert \left( f^2+2(\partial_{x} f)^2+(\partial_{x}^2 f)^2+v^2+(\partial_x v)^2\right)dx 
			+\frac{1}{L}\Int{\R} \vert\varphi' \vert vf dx\\
			&\leq  -\dfrac{|\sigma|}{2L}\Int{\R} \vert \varphi'\vert \left( f^2+2(\partial_{x} f)^2+(\partial_{x}^2 f)^2+v^2+(\partial_x v)^2\right)dx \\
			& \qquad +\frac{1}{2L}\Int{\R} \vert \varphi'\vert v^2dx + \frac{1}{2L}\Int{\R} \vert \varphi'\vert f^2dx\\
			&=  -\dfrac{a}{2L}\Int{\R} \vert \varphi'\vert \left( f^2+v^2\right)dx - \dfrac{|\sigma|}{2L}\Int{\R} \vert \varphi'\vert \left(2(\partial_{x} f)^2+(\partial_{x}^2 f)^2+(\partial_x v)^2\right)dx.
		\end{align*}
		Therefore, by Lemma \ref{lem:equiv_norm} again, 
		\begin{align*}
			\mathcal{Q}(t) \lesssim_{L,a}   -\Int{\R} |\varphi'|\left( u^2+v^2+(\partial_x v)^2\right)dx.
		\end{align*}
This ends the proof of \eqref{eq:sim_norm_1}.
\end{proof}

Now we consider the two terms in \eqref{eq:SQ} and \eqref{eq:PQ}. First of all, note that in both cases \eqref{eq:sim_norm} and \eqref{eq:sim_norm_1},
\[
\left| \dfrac{\sigma}{2L^3}\Int{\R}\varphi'''f^2 dx \right| \lesssim  \dfrac{1}{L^3}\Int{\R}|\varphi'| f^2 dx \lesssim  \dfrac{1}{L^3}\Int{\R}|\varphi'| u^2 dx.
\]
Therefore, for $L$ large enough,
\begin{equation}\label{intermedio}
\mathcal Q(t) + \mathcal{SQ}(t) \lesssim_{L,a,b}   -\Int{\R} |\varphi'|\left( u^2+v^2+(\partial_x v)^2\right)dx.
\end{equation}
Finally, note that in both cases \eqref{eq:sim_norm} and \eqref{eq:sim_norm_1},
\[
\dfrac{\sigma}{L(p+1)}\Int{\R}\varphi' |u|^{p+1}dx = -\dfrac{|\sigma|}{L(p+1)}\Int{\R}|\varphi'| |u|^{p+1}dx \leq0.
\]
Finally, we deal with the last term in \eqref{eq:PQ}:
\begin{equation}\label{intermedio_2}
\begin{aligned}
\left|\frac{1}{L}\Int{\R} \varphi'v(1-\partial^{2}_x)^{-1}|u|^{p-1}u dx\right| \leq &~{}  \frac{\max\{a, b\}}{8L}\int_\R    |\varphi'| v^2 \\
&~{} + \frac{C}{\max\{a, b\}L} \int_\R   |\varphi'| \left((1-\partial^{2}_x)^{-1}|u|^{p-1}u \right)^2.
\end{aligned}
\end{equation}
The first term on the RHS can be absorbed by $\mathcal Q(t)$ in \eqref{intermedio}.  In what follows, we need the following auxiliary result.
			\begin{lem}[\cite{Dika}, see also \cite{KMPP}]\label{lem:comparison}
			The operator $\IOp$ satisfies the comparison priciple: for any $u,v\in H^1$
			\begin{equation}
			v\leq w\implies  \IOp v\leq \IOp w.
			\end{equation}
		\end{lem}

Now, coming back to \eqref{intermedio_2}, suppose $u\geq 0$. Then $0\leq |u|^{p-1}u \leq \|u\|_{L^\infty}^{p-1} u$, so that using \eqref{smallness0} (this is the only place where we use this hypothesis)
	\[
	0\leq\IOp( |u|^{p-1}u) \leq \|u\|_{L^\infty}^{p-1} \IOp  u  \lesssim_{a,b}  \varepsilon^{p-1} f.
	\]
	(Note that $\varepsilon$ depends on $a,b$.) Consequently, in this region
	\[
	\begin{aligned}
	 |\varphi'|  ((1-\partial_x^2)^{-1}(|u|^{p-1}u))^2  = &~{} |\varphi'|   ((1-\partial_x^2)^{-1}(|u|^{p-1}u))((1-\partial_x^2)^{-1}(|u|^{p-1}u)) \\
	 \lesssim  &~{} \varepsilon^{2(p-1)}  |\varphi'|  f^2.
	\end{aligned}
	\]
	If now $u<0$, just note that 
	\[
	 |\varphi'|   ((1-\partial_x^2)^{-1}(|u|^{p-1}u))^2 =   |\varphi'|   ((1-\partial_x^2)^{-1}(|-u|^{p-1}(-u)))^2,
	\]
	which leads to the previous case. Finally, we conclude that \eqref{auxiliar} is bounded by 
	\[
	\left|\dfrac{d}{dt}\I  \right| \lesssim \frac{\varepsilon^{2(p-1)}}L \Int{\R}  |\varphi'| (v^2 +f^2 )dx.
	\]
	Consequently, for $\varepsilon$ small we obtain
\begin{equation}\label{intermedio_3}
\frac{d}{dt}\mathcal{I}(t) = \mathcal Q(t) + \mathcal{SQ}(t) + \mathcal{PQ}(t) \lesssim_{L,a,b}   -\Int{\R} |\varphi'|\left( u^2+v^2+(\partial_x v)^2\right)dx.
\end{equation}	
Integrating in time, we have proved \eqref{integrability0} in Theorem \ref{TH1}.

	\subsection{End of proof of Theorem \ref{TH1}} Now we conclude the proof of Theorem \ref{TH1}. It only remains to prove \eqref{decay0}. First, we prove decay in the right hand side region, namely $((1+b)t,+\infty)$, $b>0$. Now we choose $\varphi(x)=\frac{1}{2}\left(1+\tanh(x)\right)$, $\sigma=-(1+b)$, $\tilde{\sigma}=-(1+\tilde{b})$ with $b>0$ and $\tilde{b}=b/2$. Consider the modified energy functional, for $t\in[2,t_0]$:
		\begin{align}
			\mathcal{I}_{t_0}(t):= \dfrac{1}{2} \Int{\R} \varphi \left(\dfrac{x+\sigma t_0-\tilde{\sigma}(t_0-t)}{L} \right)\left( u^2+v^2+(\partial_x v)^2+\frac{2}{p+1} |u|^{p+1}\right)dx.
		\end{align}
		Note that $\sigma<\tilde{\sigma}<0$. From Lemma \ref{lem:I'} and proceeding exactly as in \eqref{intermedio_3}, we have
		\begin{align}
			\frac{d }{dt}\mathcal{I}_{t_0}(t) \lesssim_{b,L} -\Int{\R}  \mbox{sech}^2\left(\dfrac{x+\sigma t_0-\tilde{\sigma}(t_0-t)}{L}\right)\left( u^2+v^2+(\partial_x v)^2\right)dx\leq 0
		\end{align}
		what it means that the new functional $\mathcal{I}_{t_0}$ is decreasing in $[2,t_0]$. Therefore, we have
		\begin{align*}
			\int_{2}^{t_0} \dfrac{d}{dt} \mathcal{I}_{t_0}(t) dt= \mathcal{I}_{t_0}(t_0)-\mathcal{I}_{t_0}(2)\leq 0\implies  \ \mathcal{I}_{t_0}(t_0)\leq\mathcal{I}_{t_0}(2).
		\end{align*}
		On the other hand, since $\lim_{x\to -\infty} \varphi (x)=0,$ we have
		\begin{align}
			\limsup_{t\to\infty} \Int{\R} \varphi\left(\dfrac{x-\beta t-\gamma}{L}\right)\left( u^2+v^2+ (\partial_x v)^2\right)(\delta,x)dx =0,
		\end{align}
		for $\beta,\gamma,\delta>0$ fixed. This yields
		\[
		\begin{aligned}
		0 & \leq  \Int{\R} \varphi\left(\dfrac{x-(1+b)t_0}{L}\right)\left( u^2+v^2+ (\partial_x v)^2\right)(t_0,x) dx\\
		& \qquad \qquad \leq  \Int{\R} \varphi\left(\dfrac{x- \frac b2t_0-(2+b)}{L}\right)\left( u^2+v^2+  (\partial_x v)^2\right)(2,x)dx, 
		\end{aligned}
		\]
		which implies,
		\begin{align*}
			\limsup_{t\to\infty} \Int{\R} \varphi\left(\dfrac{x-(1+b)t}{L}\right)\left( u^2+v^2+ (\partial_x v)^2\right)(t,x)dx=0.
		\end{align*}
		In view of \eqref{intermedio_3}, an analogous argument can be applied for the left side, i.e $(-\infty,-(1+a)t)$, but in this case we choose $\varphi(x)=\frac{1}{2}\left(1-\tanh(x)\right)$. 
	 The proof of \eqref{decay0} is complete.

	\bigskip

\section{Decay in compact sets: Proof of Theorem \ref{TH2}}\label{Sect:5}
	
\medskip

Let us find the key virial estimate to understand the dynamics on compact sets in space. Recall that from \eqref{eq:J'}, if $\sigma=0$, we have the identity on $\mathcal J$:
	\begin{equation} \label{eq:Jp}
		\begin{aligned}
			\dfrac{d}{dt}\J  =  & ~ \frac{1}{2L}\Int{\R} \varphi'\left( u^2+\frac{2}{p+1}|u|^{p+1}-v^2- (\partial_x v)^2\right)dx \\
			& ~{} - \frac1L\Int{\R}\varphi'u(1-\partial^2_x)^{-1} (u+|u|^{p-1}u)dx.
		\end{aligned}
	\end{equation}
	Assume that $\varphi'>0$. From \eqref{eq:Jp} and \eqref{eq:Np} we obtain the positivity estimate
	\begin{equation}\label{eq:lyapunov_functional}
		\begin{aligned}
			-\dfrac{d}{dt}(\mathcal{J}(t) +\mathcal{N}(t)) = &~ \frac 1{2L} \Int{\R} \varphi' v^2 dx+ \frac12 \Int{\R} \varphi' u (1-\partial^2_x)^{-1} u dx\\
			& ~{}  +  
				\frac{p-1}{(p+1)L} \Int{\R} \varphi' |u|^{p+1} dx  + 	\frac1{2L}  \Int{\R} \varphi' u(1-\partial^2_x)^{-1} (|u|^{p-1}u)dx .
		\end{aligned}
	\end{equation}
	Note the surprising fact that each term in the RHS above is {\it nonnegative}. 
		
	\begin{lem}\label{lem:posi}
		For any $\varphi$ bounded smooth and such that $\varphi'>0$, and for any $u\in H^1(\R)$,
		\begin{equation}\label{Positividad}
			\Int{\R} \varphi' u(1-\partial^2_x)^{-1} (|u|^{p-1}u)dx \geq 0.
		\end{equation}
	\end{lem}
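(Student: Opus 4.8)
The statement is that $\int_{\mathbb R}\varphi' \, u\,(1-\partial_x^2)^{-1}(|u|^{p-1}u)\,dx\geq 0$ whenever $\varphi'>0$. The natural approach is to exploit the explicit convolution kernel of the operator $(1-\partial_x^2)^{-1}$, namely $(1-\partial_x^2)^{-1}g = \tfrac12 e^{-|\cdot|}\ast g$, so that the integral becomes a double integral
\[
\frac12\iint_{\mathbb R^2}\varphi'(x)\,u(x)\,e^{-|x-y|}\,|u(y)|^{p-1}u(y)\,dx\,dy.
\]
The difficulty is that the cross term $u(x)\,|u(y)|^{p-1}u(y)$ has no definite sign, so the positivity must come from a symmetrization/completion-of-squares argument, not from pointwise positivity of the integrand.

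First I would symmetrize in $x\leftrightarrow y$. Writing $K(x,y)=\tfrac12\varphi'(x)e^{-|x-y|}$ — note $K$ is \emph{not} symmetric because of the weight $\varphi'$ — I would instead pass to the canonical variable. Following the \emph{canonical variable} device already introduced in the excerpt (Definition of canonical variable and Lemma \ref{lem:passage_canonvar}), let $g:=(1-\partial_x^2)^{-1}(|u|^{p-1}u)$, so that $(1-\partial_x^2)g=|u|^{p-1}u$, i.e. $g-g''=|u|^{p-1}u$, and the integral in \eqref{Positividad} is $\int \varphi' u g$. Now substitute $u=g-g''$ wherever $u$ multiplies $\varphi'g$ directly:
\[
\Int{\R}\varphi' u\,(1-\partial_x^2)^{-1}(|u|^{p-1}u)\,dx = \Int{\R}\varphi' (g-g'')\,g\,dx = \Int{\R}\varphi'\big(g^2 - g g''\big)\,dx.
\]
Integrating by parts the term $-\int\varphi' g g''$, one gets $\int\varphi'(g')^2 + \int \varphi'' g g' = \int\varphi'(g')^2 - \tfrac12\int\varphi''' g^2$ (the last step another integration by parts on $\varphi'' g g' = \tfrac12\varphi''(g^2)'$). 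Hence
\[
\Int{\R}\varphi' u\,(1-\partial_x^2)^{-1}(|u|^{p-1}u)\,dx = \Int{\R}\varphi'\big(g^2 + (g')^2\big)\,dx - \frac12\Int{\R}\varphi''' g^2\,dx.
\]
This is almost a sum of manifestly nonnegative terms, except for the $\varphi'''$ correction. The cleanest way to kill it is to observe that the choice of $\varphi$ in the applications (e.g. $\varphi=\tanh$ or the logistic function) can be taken with $\varphi'>0$; but for a general such $\varphi$ the third derivative term is a genuine obstruction.

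The main obstacle, therefore, is controlling the $\varphi'''$ term. There are two ways out, and I would present whichever the authors intend: (a) impose (as is already done elsewhere in the paper, cf. Lemma \ref{lem:equiv_norm}) the smallness condition $|\varphi'''|\leq \lambda\varphi'$ with $\lambda$ small, in which case $\int\varphi'(g^2+(g')^2) - \tfrac12\int\varphi''' g^2 \geq (1-\tfrac{\lambda}{2})\int\varphi' g^2 + \int\varphi'(g')^2 \geq 0$; or (b) avoid derivatives of $\varphi$ altogether by staying with the kernel representation and proving positivity of the bilinear form $g\mapsto \int\varphi'(g^2+(g')^2)$ directly — but this is the same computation. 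I expect option (a) is what is meant, since the paper already works throughout with weights satisfying $|\varphi'|\lesssim\lambda\varphi$ and, by the same token, $|\varphi'''|\lesssim\lambda\varphi'$ for $\varphi=\tanh$ scaled by $L$ (indeed $|\varphi'''(x/L)|\lesssim L^{-2}\varphi'(x/L)$, which is $\ll\varphi'$ for $L\gg1$). So the final shape of the argument is: rewrite via the canonical variable $g$, integrate by parts twice to reach $\int\varphi'(g^2+(g')^2)-\tfrac12\int\varphi'''g^2$, then absorb the last term using $|\varphi'''|\ll\varphi'$ (equivalently, the hypothesis already in force that $L$ is large), concluding nonnegativity. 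I would also remark that the same manipulation with $|u|^{p-1}u$ replaced by $u$ gives the nonnegativity of $\int\varphi' u(1-\partial_x^2)^{-1}u$ used in \eqref{eq:lyapunov_functional}, so the lemma can be stated once and applied twice.
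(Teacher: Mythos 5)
There is a fatal error at the first step of your computation. You set $g:=(1-\partial_x^2)^{-1}(|u|^{p-1}u)$, correctly note that $g-g''=|u|^{p-1}u$, and then substitute $u=g-g''$ into $\int_{\R}\varphi'\,u\,g\,dx$. But $g-g''$ equals $|u|^{p-1}u$, not $u$; these coincide only for $p=1$, which is excluded. What your integration by parts actually proves is
\[
\int_{\R}\varphi'\,(|u|^{p-1}u)\,(1-\partial_x^2)^{-1}(|u|^{p-1}u)\,dx=\int_{\R}\varphi'\big(g^2+(g')^2\big)\,dx-\frac12\int_{\R}\varphi'''\,g^2\,dx,
\]
i.e.\ nonnegativity (up to the $\varphi'''$ correction) of the \emph{self}-pairing $\langle w,(1-\partial_x^2)^{-1}w\rangle_{\varphi'}$ with $w=|u|^{p-1}u$. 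That is the statement you mention in your closing remark (and it is indeed how one sees $\int\varphi'\,u\,(1-\partial_x^2)^{-1}u\ge0$, cf.\ Remark \ref{rem:equivalence}), but it is not the lemma. The lemma concerns the \emph{cross}-pairing between $u$ and $|u|^{p-1}u$, which is not a quadratic form in a single function, so no completion of squares of this type is available: in the kernel representation the contributions $\varphi'(x)u(x)e^{-|x-y|}|u(y)|^{p-1}u(y)$ with $u(x)$, $u(y)$ of opposite signs are genuinely negative and are not removed by symmetrization. Your secondary discussion of how to absorb the $\varphi'''$ term is therefore moot; the argument is proving a different inequality.

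The paper's proof is of a completely different and much more elementary nature: it is a pointwise sign argument using the comparison (maximum) principle for $(1-\partial_x^2)^{-1}$ stated in Lemma \ref{lem:comparison}, i.e.\ the positivity of the kernel $\frac12 e^{-|x-y|}$. If $u\ge0$ then $|u|^{p-1}u\ge0$, hence $(1-\partial_x^2)^{-1}(|u|^{p-1}u)\ge0$ pointwise and the integrand $\varphi'\,u\,(1-\partial_x^2)^{-1}(|u|^{p-1}u)$ is nonnegative pointwise; the case $u\le0$ is symmetric. In particular no derivatives of $\varphi$ are taken and no smallness condition such as $|\varphi'''|\le\lambda\varphi'$ is needed. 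If you want to repair your approach you would have to inject this sign information (e.g.\ through the comparison principle applied to $|u|^{p-1}u\le\|u\|_{L^\infty}^{p-1}u$ on $\{u\ge0\}$, as the paper does elsewhere); the quadratic-form identity alone cannot yield \eqref{Positividad}.
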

	
	\begin{rem}
		Note that this result is independent of the size of $u$. Note also that in order to prove this lemma, we need at least $u\in H^{1/2+}(\R)$. Therefore, we see that $(u,v)\in L^2\times H^1$ (the energy space) seems not sufficient for our purposes. 
	\end{rem}

	\begin{proof}[Proof of Lemma \ref{lem:posi}]
		Suppose $\varphi'>0$. If $u\geq 0$, we have $|u|^{p-1}u\geq 0$. From Lemma \ref{lem:comparison} we conclude
		\[
		\Int{\R}  \varphi' u\IOp(|u|^{p-1}u)dx\geq 0.
		\]
		In a similar way, if the case $u\leq 0$ follows. 
		\end{proof}

	From the previous Lemma and \eqref{Positividad} we obtain 
	\begin{equation}\label{eq:J'+N'}
		\begin{split}
			& -\dfrac{d}{dt}(\mathcal{J}(t) +\mathcal{N}(t)) 
			\\
			& \qquad  ~{} \geq  \frac 1{2L} \Int{\R} \varphi' v^2 dx+ \frac1{2L} \Int{\R} \varphi' u (1-\partial^2_x)^{-1} udx +\frac{p-1}{p+1} \Int{\R} \varphi' |u|^{p+1}  dx.
		\end{split}
	\end{equation}
	This last estimate tells us exactly what are the quantities in gIB which integrate in time. As far as we could understand, it was not possible to get integrability in time of the $L^2$ norm of $u$, nor $\partial_x v$. A corollary from this last estimate is the following result.
	
	\begin{cor}\label{cor:int_local}
		Let $(u,v)$ be a global solution of \eqref{IB} in the class $(C\cap L^\infty)(\R,H^1\times H^2)$, with initial data $(u,v)(t=0)=(u_0,v_0)\in H^1\times H^2$. Let $\varphi(x):=\tanh\left(x\right)$ in \eqref{eq:lyapunov_functional}, such that $\varphi'=\sech^2>0$. Then we have the following consequences of \eqref{eq:J'+N'}:
		\begin{enumerate}
		\item Integrability in time:
		\begin{equation}\label{eqn:seq1}
		 \int_{\R}  \Int{\R} \sech^2\left( \frac xL\right)(v^2+u\IOp u + |u|^{p+1})  dxdt \lesssim_{u_0,v_0,L} 1.
		\end{equation}
		\item Sequential decay to zero: there exists $t_n\uparrow \infty$ such that 
		\begin{equation}\label{sucesion}
			 \lim_{n\to \infty} \mathcal I(t_n) =0.
		\end{equation}
		\end{enumerate}
	\end{cor}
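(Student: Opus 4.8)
The plan is to read off both assertions from the single differential inequality \eqref{eq:J'+N'}, using as the only extra input the uniform bound $M:=\sup_{t\in\R}\|(u,v)(t)\|_{H^1\times H^2}<\infty$ granted by the hypothesis.

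\textbf{Integrability \eqref{eqn:seq1}.} The key point is that, in contrast with its right-hand side, the functional $\mathcal J(t)+\mathcal N(t)$ is \emph{bounded uniformly in time}. Indeed, with $\varphi=\tanh$ one has $\|\varphi\|_{L^\infty}\le 1$ and $\varphi'=\sech^2\le 1$, so Cauchy--Schwarz gives
\[
|\mathcal J(t)|\le \|u\|_{L^2}\|v\|_{L^2}+\|\partial_x u\|_{L^2}\|\partial_x v\|_{L^2}\lesssim M^2,\qquad |\mathcal N(t)|\le\tfrac1{2L}\|u\|_{L^2}\|\partial_x v\|_{L^2}\lesssim_L M^2.
\]
By Lemma \ref{lem:posi} and the comparison principle (Lemma \ref{lem:comparison}), every term on the right of \eqref{eq:J'+N'} is nonnegative; hence integrating \eqref{eq:J'+N'} over $[-T,T]$ produces
\[
\int_{-T}^{T}\!\!\Int{\R}\sech^2\!\Big(\tfrac xL\Big)\big(v^2+u\IOp u+|u|^{p+1}\big)\,dx\,dt\ \le\ \big(\mathcal J(-T)+\mathcal N(-T)\big)-\big(\mathcal J(T)+\mathcal N(T)\big)\ \lesssim_{M,L}1,
\]
and monotone convergence as $T\to\infty$ yields \eqref{eqn:seq1}.

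\textbf{Sequential decay \eqref{sucesion}.} Set $g(t):=\Int{\R}\sech^2(x/L)\big(v^2+u\IOp u+|u|^{p+1}\big)(t,x)\,dx\ge 0$. Since $g\in L^1(\R)$ by \eqref{eqn:seq1}, $\liminf_{t\to+\infty}g(t)=0$, so there is $t_n\uparrow\infty$ with $g(t_n)\to 0$; in particular $\int\sech^2 v^2$, $\int\sech^2|u|^{p+1}$ and $\int\sech^2 u\IOp u$ all vanish along $t_n$. Writing $f=\IOp u$ and integrating by parts as in the proof of Lemma \ref{lem:passage_canonvar}, $\int\sech^2 u\IOp u=\int\sech^2(f^2+(\partial_x f)^2)+O(L^{-2})\int\sech^2 f^2$, so for $L$ fixed large also $\int\sech^2(x/L)(f^2+(\partial_x f)^2)(t_n)\to0$. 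Now I would upgrade this ``half-norm'' information to genuine local control of $u$ and $\partial_x v$ — this is where the hypothesis $(u,v)\in L^\infty(\R,H^1\times H^2)$ is indispensable. Since $u=\Op f=f-\partial_x^2 f$, it is enough to show $\int\sech^2(\partial_x^2 f)^2(t_n)\to0$; integrating by parts twice and using $|\partial_x(\sech^2(x/L))|\lesssim L^{-1}\sech^2(x/L)$,
\[
\Int{\R}\sech^2\Big(\tfrac xL\Big)(\partial_x^2 f)^2\ \lesssim\ \|\partial_x^3 f\|_{L^2}\Big(\Int{\R}\sech^2(\partial_x f)^2\Big)^{1/2}+\tfrac1L\Big(\Int{\R}\sech^2(\partial_x^2 f)^2\Big)^{1/2}\Big(\Int{\R}\sech^2(\partial_x f)^2\Big)^{1/2};
\]
absorbing the last term (finite since $f\in H^3$) and using $\|\partial_x^3 f(t_n)\|_{L^2}\lesssim\|u(t_n)\|_{\dot H^1}\le M$ gives $\int\sech^2(\partial_x^2 f)^2(t_n)\to0$, hence $\int\sech^2(x/L)u^2(t_n)\to0$. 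The identical computation applied to $v\in H^2$ (with $\|\partial_x^2 v(t_n)\|_{L^2}\le M$ and $\int\sech^2 v^2(t_n)\to0$) gives $\int\sech^2(x/L)(\partial_x v)^2(t_n)\to0$. Adding the four pieces, $\mathcal I(t_n)\to0$, where $\mathcal I$ is the localized energy \eqref{eq:I} with $\sigma=0$ and weight $\sech^2(x/L)$.

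\textbf{Main obstacle.} The bookkeeping for \eqref{eqn:seq1} is routine once one notices that $\varphi,\varphi'$ are bounded and each term of \eqref{eq:J'+N'} is signed; the genuinely delicate point is the last interpolation step, since \eqref{eq:J'+N'} only controls $u\IOp u$ — never $u^2$ nor $(\partial_x v)^2$ — and extracting a good sequence for the \emph{full} energy density therefore really does require trading one derivative against the uniform $H^1\times H^2$ bound, with the weighted top-order term absorbed on the left.
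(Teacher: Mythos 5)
Your proposal is correct and follows essentially the same route as the paper: integrate the signed virial inequality \eqref{eq:J'+N'} against the uniform bound on $\mathcal J+\mathcal N$ to get \eqref{eqn:seq1}, extract a sequence from the $L^1_t$ bound, pass to the canonical variable $f$ as in Remark \ref{rem:equivalence}, and upgrade the half-norm control to the full local energy by trading one derivative against the $L^\infty_t(H^1\times H^2)$ bound. Your explicit integration-by-parts-and-absorption step is just the unpacked version of the weighted interpolation inequalities used in the paper's proof.
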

	
	\begin{rem}
	Note that the smallness condition on $(u,v)$ is not needed here; only boundedness in time of the $H^1\times H^2$ norm. Also, property \eqref{sucesion} is not trivially obtained from \eqref{eqn:seq1} as in previous works; some additional estimates are needed in order to ensure full decay along a subsequence of the local energy norm present in $\mathcal I(t_n)$.
	\end{rem}
	
	\begin{rem}[About the equivalence of norms for canonical variables]\label{rem:equivalence}
	Note that if $f=\IOp u$, then for $L$ large,
	\[
	\begin{aligned}
	 \Int{\R} \sech^2\left(\frac{x}{L}\right) u\IOp u dx &~= \Int{\R} \sech^2\left(\frac{x}{L}\right) f(f- \partial_x^2 f) dx\\
	 &~= \Int{\R} \sech^2\left(\frac{x}{L}\right) ( f^2 + (\partial_xf)^2)dx  + \frac1L \Int{\R} (\sech^2)'\left(\frac{x}{L}\right) f\partial_x f dx\\
	&~ \gtrsim \frac12\Int{\R} \sech^2\left(\frac{x}{L}\right) ( f^2 + (\partial_xf)^2)dx.
	\end{aligned}
	\]
	Consequently, from \eqref{eqn:seq1},
	\[
	 \Int{\R} \Int{\R} \sech^2\left(\frac{x}{L}\right) ( f^2 + (\partial_xf)^2)dxdt  \lesssim_{u_0,v_0,L} 1.
	\]
	This information will be useful in what follows.
	\end{rem}

	\begin{proof}[Proof of Corollary \ref{cor:int_local}]
Estimate	\eqref{eqn:seq1} is direct from \eqref{eq:J'+N'}. On the other hand, from \eqref{eqn:seq1} we clearly have the existence of an increasing sequence $t_n\uparrow \infty$ such that 
		\[
		\lim_{n\to+\infty} \Int{\R} \sech^2\left( \frac xL\right)(v^2+u\IOp u + |u|^{p+1})(t_n,x)  dxdt =0.
		\]
		From this fact and the $L^\infty$ boundedness in time of $u$ we easily have
			\[
			\lim_{n\to \infty} \Int{\R} \sech^2\left(\frac{x}{L}\right)u^2(t_n,x)  dx =0.
			\]
			Indeed, from Remark \ref{rem:equivalence} we have 
			\[
			\lim_{n\to \infty} \Int{\R} \sech^2\left(\frac{x}{L}\right)(f^2+(\partial_x f)^2)(t_n,x)  dx =0.
			\]
			Hence, using interpolation, and $L\gg1$,
			\[
			\begin{aligned}
			\left\| \sech^2\left( \frac xL\right) \partial_x^2 f(t_n)\right\|_{L^2}^2 \lesssim &~{} \left\| \sech^2\left( \frac xL\right) \partial_x f(t_n)\right\|_{L^2}\left\| \partial_x^3\left( \sech^2\left( \frac xL\right) f(t_n) \right) \right\|_{L^2}\\
			 \lesssim  &~{} \left\|\sech^2\left( \frac xL\right) \partial_x f(t_n)\right\|_{L^2},
			\end{aligned}
			\]
			therefore we obtain the desired result. 
			Finally, again from interpolation, the boundedness of $v(t_n)$ in $H^2$, $L\gg1$, we have the estimate
			\[
			\begin{aligned}
			\left\| \sech^2\left( \frac xL\right) \partial_x v(t_n)\right\|_{L^2}^2 \lesssim &~{} \left\| \sech^2\left( \frac xL\right) v(t_n)\right\|_{L^2}\left\| \partial_x^2\left( \sech^2\left( \frac xL\right) v(t_n) \right) \right\|_{L^2}\\
			 \lesssim  &~{} \left\|\sech^2\left( \frac xL\right) v(t_n)\right\|_{L^2},
			\end{aligned}
			\]
			so $\| \sech^2\left( \frac xL\right) \partial_x v(t_n)\|_{L^2} \to 0$ as $n\to +\infty.$ This proves \eqref{sucesion}.
	\end{proof}

	\subsection{End of proof of Theorem \ref{TH2}} 
	Consider  $\mathcal I(t)$ in \eqref{eq:I} with $\sigma=0$, $\varphi= \sech^2$.  From \eqref{eq:I'} we have
	\[
	\dfrac{d}{dt}\I   = -\frac{1}{L}\Int{\R} \varphi' v(1-\partial_x^2)^{-1}(u+|u|^{p-1}u)dx.
	\]
	Let $g:=\IOp(u+|u|^{p-1}u)$, so that $g= f +\IOp(|u|^{p-1}u)$. We have
	\[
	\dfrac{d}{dt}\I   = -\frac{1}{L}\Int{\R} \varphi' vg dx =-\frac{1}{L}\Int{\R} \varphi' v\left(  f +\IOp(|u|^{p-1}u) \right) dx.
	\]
	Therefore,
	\begin{equation}\label{auxiliar}
		\left|\dfrac{d}{dt}\I  \right| \lesssim \frac1L \Int{\R}\sech^2\left(\frac{x}{L}\right)(v^2 +f^2 + ((1-\partial_x^2)^{-1}(|u|^{p-1}u))^2)dx.
	\end{equation}
	Suppose $u\geq 0$. Then $0\leq |u|^{p-1}u \leq \|u\|_{L^\infty}^{p-1} u$, so that from Lemma \ref{lem:comparison}
	\[
	0\leq\IOp( |u|^{p-1}u) \leq \|u\|_{L^\infty}^{p-1} \IOp  u  \lesssim f.
	\]
	(Here we use the boundedness character of $u$ in $L^\infty$.) Consequently, in this region
	\[
	\begin{aligned}
	 \sech^2\left(\frac{x}{L}\right)  ((1-\partial_x^2)^{-1}(|u|^{p-1}u))^2  = &~{}\sech^2\left(\frac{x}{L}\right)  ((1-\partial_x^2)^{-1}(|u|^{p-1}u))((1-\partial_x^2)^{-1}(|u|^{p-1}u)) \\
	 \lesssim  &~{}  \sech^2\left(\frac{x}{L}\right) f^2.
	\end{aligned}
	\]
	If now $u<0$, just note that 
	\[
	\sech^2\left(\frac{x}{L}\right)  ((1-\partial_x^2)^{-1}(|u|^{p-1}u))^2 =  \sech^2\left(\frac{x}{L}\right)  ((1-\partial_x^2)^{-1}(|-u|^{p-1}(-u)))^2,
	\]
	which leads to the previous case. Finally, we conclude that \eqref{auxiliar} is bounded by 
	\[
	\left|\dfrac{d}{dt}\I  \right| \lesssim \frac1L \Int{\R}\sech^2\left(\frac{x}{L}\right)(v^2 +f^2 )dx.
	\]
	Integrating in $[t,t_n]$, we have (using Remark \ref{rem:equivalence})
	\[
	\left| \I  - \mathcal I(t_n) \right|\lesssim \int_{t}^{t_n} \frac1L \Int{\R}\sech^2\left(\frac{x}{L}\right)(v^2 +f^2 )dxdt.
	\]    
	Sending $n$ to infinity, we have from \eqref{sucesion} that $ \mathcal I(t_n) \to 0$ and
	\[
	\left| \I  \right|\lesssim \int_{t}^{\infty} \frac1L \Int{\R}\sech^2\left(\frac{x}{L}\right)(v^2 +f^2 )dxdt.
	\]    
	Finally, if $t\to \infty$, we conclude. Since for $L^2\times H^1$ data $\mathcal I(t)\gtrsim \|(u,v)(t)\|_{L^2\times H^1}^2$, this proves Theorem \ref{TH2}.


\bigskip
\bigskip

\end{document}